\documentclass[12pt,reqno]{amsart}

\usepackage{xcolor}
\usepackage{soul}

\usepackage{amsmath}
\usepackage{amssymb}
\usepackage{hhline}
\usepackage{graphicx}
\textheight=587pt

\newtheorem{theorem}{Theorem}

\newtheorem{theoremd}{Theorem}

\newtheorem{examp}[theoremd]{Example\!}

\newtheorem{rk}[theoremd]{Remark\!\!}

\newcommand\bib[1]{\bibitem[#1]{#1}}

\renewcommand\a{\alpha}

\renewcommand\d{\delta}

\newcommand\fg{\mathfrak{g}}
\newcommand\g{\gamma}

\renewcommand\H{{\mathcal H}}

\renewcommand\O{\Omega}
\newcommand\oo{\omega}
\newcommand\op[1]{\mathop{\rm #1}\nolimits}
\newcommand\ot{\otimes}
\newcommand\p{\partial}

\newcommand\R{{\mathbb R}}
\renewcommand\t{\times}

\newcommand\vp{\varphi}
\newcommand\we{\wedge}
\newcommand\x{\xi}
\newcommand\z{\theta}

\begin{document}
 \title[On integrability of rank 2 SR-structures]{On integrability of certain \\ rank 2 sub-Riemannian structures}
 \author[B.\ Kruglikov, A.\ Vollmer, G.\ Lukes-Gerakopoulos]{Boris Kruglikov, Andreas Vollmer, \\ Georgios Lukes-Gerakopoulos}
 \address{BK: \ Institute of Mathematics and Statistics, University of Troms\o, Troms\o\ 90-37, Norway.
\quad E-mail: {\tt boris.kruglikov@uit.no}. \newline
 \hphantom{W} AV: \ Mathematisches Institut, Friedrich-Schiller-Universit\"at, 07737 Jena, Germany.
 \quad Email: {\tt andreasdvollmer@gmail.com}. \newline
 \hphantom{W} AV: \ INdAM - Politecnico di Torino, Dipartimento di Scienze Matematiche,
 Corso Duca degli Abruzzi 24, 10129 Torino, Italy.
 \newline
 \hphantom{W} GLG: \ Institute of Theoretical Physics, Faculty of Mathematics and Physics,
Charles University in Prague, 121 16 Prague, Czech Republic.
 \quad Email: {\tt gglukes@gmail.com}.}
 \date{}
 \thanks{BK and AV were supported by the NFR and DAAD cooperation grant 2014-2015 respectively. AV is a research fellow of Istituto Nazionale di Alta Matematica and thanks GRK 1523 (DFG) and the project FIR-2013 Geometria delle equazioni differenziali for financial support. GLG was supported by the UNCE-204020 grant.}
 \maketitle

 \begin{abstract}
We discuss rank 2 sub-Riemannian structures on low-dimensional manifolds
and prove that some of these structures in dimension 6, 7 and 8 have a maximal amount of
symmetry but no integrals polynomial in momenta of low degrees, except for those coming from
the Killing vector fields and the Hamiltonian, thus indicating non-integrability of the
corresponding geodesic flows.
 \end{abstract}

\section*{Introduction}

A sub-Riemannian (SR) structure on a (connected smooth) manifold $M$ consists of a
completely non-holonomic (or bracket-generating) vector distribution $\Delta\subset TM$
and a Riemannian metric $g\in\Gamma(S^2_+\Delta^*)$ on it.
For points $x,y\in M$ denote by $\H(x,y)$ the space of integral (horizontal) curves
$\g:[0,1]\to M$, $\dot\g\in\Delta$, joining $x$ to $y$: $\g(0)=x$, $\g(1)=y$.
It is nonempty by the Rashevsky-Chow theorem.

The length functional $l_g(\g)=\int_0^1\|\dot\g\|_gdt$ on the space of horizontal curves
defines the {\em sub-Riemannian\/} distance on $M$ by
 $$
d_g(x,y)=\inf\limits_{\g\in\H(x,y)}l_g(\g).
 $$
A curve $\g\in\H$ is called {\em geodesic} if it locally minimizes the length
between any two (close) points with respect to $d_g$. The description of most
geodesics (normal ones) is given by the Euler-Lagrange variational
principle. There is a Hamiltonian reformulation of this principle, called
the Pontrjagin maximum principle \cite{PMP}. It allows one to consider
the sub-Riemannian geodesic flow as the usual Hamiltonian flow on $T^*M$ (abnormal extremals
play no role in this respect and will be ignored in this paper).
We will recall this together with the other relevant material in Section \ref{S1}.

As in the standard theory of Riemannian geodesics, the metric $g$ is {\em integrable} if this Hamiltonian
flow is integrable on $T^*M$ in the Liouville sense, i.e.\ there are almost everywhere
functionally independent integrals $I_1=H,I_2,\dots,I_n$ that Poisson-commute $\{I_k,I_l\}=0$
(see \cite{A,AKN} and also \cite{BF} for a review of methods and problems).

In this paper we investigate certain aspects of  integrability of SR-structures on
vector distributions of rank 2 (the smallest rank in non-holonomic mechanics). In general,
SR-structures need not be integrable. For the first time, this was illustrated with a precise example
in \cite{MSS} by Montgomery, Shapiro and Stolin. More examples can be found in \cite{Kr}.
We will focus on left-invariant SR-structures on Carnot groups, which serve as tangent cones
(nilpotent approximations) for general SR-structures. In Riemannian geometry, the tangent
cone is the Euclidean space and it is integrable. This integrability does not carry over to the
sub-Riemannian case.

We discuss {\em integrability}\footnote{We consider the integrals that are analytic
in momenta. For a quadratic Hamiltonian $H$, the existence of such an integral
implies by \cite{Dar,Wh} the existence of an integral that is homogeneous polynomial in momenta.
Moreover, in all our cases we need only one additional integral $I$ commuting with $H$ and the
linear integrals, so this $I$ can always be assumed homogeneous polynomial in momenta.}
of SR-structures and particularly pose the specific question whether it
is related to the amount of {\em symmetry} present in these structures. On Carnot
groups of dimensions up to 5 the geodesic flow of all left-invariant SR-structures are Liouville integrable
(see Section \ref{S2}), however starting from dimension 6 we show that the
final polynomial integrals, required for Liouville integrability,
cease to exist at least in low degrees (up to 6), even in the maximally symmetric situations. For precise formulations in dimension 6, 7 and 8, see Theorems \ref{Thm6D}, \ref{Thm7D} and \ref{Thm8D}, respectively.

In Section~\ref{S8}, we reduce the corresponding systems of PDEs to systems with 2 degrees of freedom in
a convenient form that allows us to consider obstructions for integrability in a uniform setting.
The reduced systems provide a parametric 3-components first order system of ODEs. Its dynamics is interesting in its own right (we speculate that the case corresponding to dimension 6 is similar to a forced pendulum).

In Section~\ref{S9}, we complement our results with the trajectory portraits that demonstrate
irregular dynamics. Our computations show that the systems exhibit chaotic behavior for various
values of parameters in the reduced formulation, providing more evidence of non-integrability.
In dimension 8 our study agrees with the numerical observations of \cite{Sa}.

The combination of established low-degree non-integrability, the reduced formulation
(the known integrable quadratic Hamiltonians with 2 degrees of freedom have integrals of $\op{deg}\le4$),
and the numerical evidence strongly suggests that generic SR-structures
are in general not Liouville integrable with analytic in momenta integrals.
In the Riemannian setting this was recently proved in \cite{KM2}.

The technique we use in sections~\ref{S5}, \ref{S6} and \ref{S7} is inherited from the work \cite{KM1},
where it was exploited to prove rigorously non-existence of low-degree integrals for the Zipoy-Voorhees metric from general relativity (for related work on this topic, see \cite{LG, MPS, V1}). We will explain the method in detail in Sections~\ref{S3} and \ref{S4}. In short, it allows us to reduce the search of integrals of
a fixed degree $d$ to a linear algebra problem, namely to a computation of the rank of a matrix with
the size polynomially growing with $d$. The entries of this matrix are integers, and the computer
verification, solely based on evaluation of the rank, gives a rigorous proof of the result.
To the best knowledge of the authors, it is at present the only method
that allows one to make non-existence statements for the class of integrals under consideration.

\medskip

\textsc{Acknowledgment.} We are grateful to the referees for useful suggestions improving the
exposition. One of them proposed an idea of modular computations that we implemented in the revision.
We also thank Vladimir Matveev for encouragement throughout our work.

\section{Hamiltonian systems and Sub-Riemannian structures}\label{S1}

Let us recall some basic facts from the theory of integrable systems and
sub-Riemannian geometry.

\subsection{\sl Integrable Hamiltonian flows.}\label{S1.1}
Let $(W^{2n},\O)$ be a symplectic manifold. A Hamiltonian vector field is
the $\O$-dual to an exact 1-form $dH$: $\O(X_H,\cdot)=dH$.
The Poisson bracket is then $\{F,G\}=\O(X_F,X_G)$.

The Hamiltonian system is called Liouville integrable if
in addition to $I_1=H$ there are integrals $I_2,\dots,I_n$, which are in involution,
$\{I_j,I_k\}=0$, and which are functionally independent almost everywhere. By the Liouville
theorem \cite{A} a full measure set $W'\subset W$ is then foliated by
cylinders (tori in the compact case), and each cylinder has a neighborhood
with coordinates $\phi\in \mathbb{T}^{n-r}\t\R^r$, $J=J(I)\in\R^n$ such that
$\O=dJ\we d\phi$, $\{I=\op{const}\}\simeq \mathbb{T}^{n-r}\t\R^r$ and the
flow is $\dot\phi=\Phi(I)$ for some function $\Phi$. The noncompactness rank $r$ can
be positive for sub-Riemannian geodesic flows, even on compact manifolds.

It might happen that in addition to involutive integrals $I_1,\dots,I_n$ there
are some other integrals $I_{n+1},\dots,I_{n+k}$. Then the motion
is restricted to sub-cylinders in $\mathbb{T}^{n-r}\t\R^r$ --
this is the resonance (the non-compact case is more subtle).
Existence of additional integrals (more than $n$ commuting ones) is usually referred to
as super-integrability.

Let $(M,g)$ be a Riemannian manifold. The geodesic flow
$\vp_t:TM\to TM$ is the Hamiltonian flow on
$T^*M\stackrel{\sharp^g}\simeq TM$ with the standard
symplectic structure $\O$ if we choose the Hamiltonian $H=\frac12\|p\|^2$,
$p\in T^*M$. The metric $g$ is called integrable if the flow $\vp_t$ is
Liouville-integrable. If $M$ is compact, then the noncompactness rank $r=0$.

\subsection{\sl Pontrjagin maximum principle.}\label{S1.2}

Consider now the non-holonomic case. We start with an arbitrary completely
non-holonomic distribution $\Delta\subset TM$ and a smooth field of Riemannian metrics
$g_x\in S^2_+\Delta^*_x$, $x\in M$.

Let $(T^*M,\O)$ be the cotangent bundle equipped with the standard
symplectic structure. The sub-Riemannian metric $g$ yields the isomorphism
$\sharp^g:\Delta^*\to\Delta$.  Consider the inclusion $i:\Delta\hookrightarrow TM$.
The following composition defines a vector bundle morphism $\Psi_g$:
 $$
T^*M\stackrel{i^*}\to\Delta^*\stackrel{\sharp^g}\to\Delta\stackrel{i}\to TM.
 $$
Contrary to the Riemannian situation this is not an isomorphism. Indeed, we have:
$\op{Ker}(\Psi_g)=\op{Ann}\Delta$ and $\op{CoKer}(\Psi_g)=TM/\Delta$.

Define the Hamiltonian function $H$ on $T^*M$ as the composition
 $$
T^*M\stackrel{i^*}\to\Delta^*\stackrel{\sharp_g}\to\Delta
\stackrel{\frac12\|\cdot\|^2_g}\to\R.
 $$
This function can be locally described as follows. Let $\x_1,\dots,\x_k$
be some orthonormal basis of vector fields tangent to $\Delta$. Every vector
field is a fiber-linear function on $T^*M$. So we have
$H=\frac12\sum_1^k\x_i^2$.

The Pontrjagin maximum principle \cite{PMP} states that trajectories of
the Hamiltonian vector field $X_H$ in the region $\{H>0\}$ projected to $M$ are extremals of
the corresponding variational problem. They are called (normal) geodesics.

 \begin{examp}\label{HSR}
Consider the Heisenberg group $G=\op{Heis}_3$ with the standard left-invariant metric on
$\Delta=\langle \p_{x_1}+x_2\p_{x_3},\p_{x_2}\rangle\subset T\R^3(x_1,x_2,x_3)$. Solving the Hamiltonian equation
for $2H=(p_1+x_2p_3)^2+p_2^2$ we see that SR-geodesics are spirals in the direction of the $x_3$-axis,
projecting to arbitrary (including radius $\infty$) circles on the plane $\R^2(x_1,x_2)$.
 \end{examp}

\subsection{\sl Vector distributions.}\label{S1.3}
Given a vector distribution $\Delta\subset TM$ we define its weak derived flag by bracketing the
generating vector fields: $\Delta_1=\Delta$, $\Delta_{i+1}=[\Delta,\Delta_i]$.
The distribution is non-holonomic if $\Delta\subsetneq\Delta_2$ and completely non-holonomic if
$\Delta_k=TM$ for some $k$. We will assume that the rank of the distributions $\Delta_i$ is constant throughout $M$,
then $(\dim\Delta_1,\dim\Delta_2,\dots,\dim\Delta_k)$ is called the growth vector of $\Delta$.

The family of graded vector spaces $\{\mathfrak{g}_i=\Delta_i/\Delta_{i-1}\}$,
equipped with the natural bracket induced by the commutators of vector fields,
form a sheaf of graded nilpotent Lie algebras $\fg=\fg_1\oplus\dots\oplus\fg_k$ over $M$.
In this paper we consider only the strongly regular case, when it is a bundle
(i.e.\ the structure constants in the fiber do not depend on $x\in M$).
The typical fiber is then called the Carnot algebra of $\Delta$.

For the rank 2 distribution $\Delta\subset TM$ the prolongation is defined as follows \cite{AK,Mon}.
Let $\hat M=\mathbb{P}\Delta=\{(x,\ell):x\in M,\ell\subset\Delta_x\}$ be the natural
$\mathbb{S}^1$-bundle over $M$ with projection $\pi:\hat M\to M$. Then the prolonged distribution
$\hat\Delta\subset T\hat M$ is given by $\hat\Delta_{x,\ell}=\pi_*^{-1}(\ell)\subset T_{x,\ell}\hat M$.

 \begin{examp}\label{HSRcont}
The prolongation of the tangent bundle of $\R^2$ is $(\op{Heis}_3,\Delta)$.
The prolongation of $(\op{Heis}_3,\Delta)$ is the Engel structure discussed below.
 \end{examp}

Even though the SR-behavior can be quite different, the prolonged distribution has the geometry
readable off the original distribution and, starting from dimension 5, we will assume that $\Delta$ is
not a prolongation of a rank 2 distribution from lower dimensions. 

\section{SR-structures on Carnot groups of dimension 3 to 5}\label{S2}

In this section we discuss left-invariant SR structures on low-dimen\-sio\-nal Carnot groups.
We will show that up to dimension 5 all of them are Liouville integrable. This holds for distributions
$\Delta$ of all ranks, but since the concern of the paper is $\op{rank}(\Delta)=2$, we restrict to this case.

A Carnot group $G$ is a graded nilpotent Lie group, with its Lie algebra
$\fg=\fg_1\oplus\dots\oplus\fg_k$ being bracket-generated by $\fg_1$,
and distribution $\Delta\subset TG$ corresponding to it.
Equipped with a left-invariant Riemannian metric on $\Delta$, such a group naturally serves
as a tangent cone at a chosen point of a general SR-structure, see e.g. \cite{BR} for details.

In what follows we use the following notations. A basis $e_i$ of $\fg$ generates the basis $\oo_i\in(\fg^*)^*$
of linear functions on $\fg^*$, given by
 $$
\oo_i(p)=\langle p,e_i\rangle,\quad p\in\fg^*.
 $$
We identify $e_i$ with the left-invariant vector fields on $G$, and $\oo_i$ with the left-invariant linear
functions on $T^*G$. Their right-invariant analogs will be denoted by $f_i$ and $\theta_i$ respectively.

The Lie-Poisson structure $\nabla_{LP}$ on $\fg^*$ induces the Poisson structure
$(\nabla_{LP},-\nabla_{LP})$ on $\fg^*\oplus\fg^*$ and this yields the following commutation relation
of the above functions with respect to the canonical symplectic structure on $T^*G$:
If $[e_i,e_j]=c_{ij}^ke_k$, then
 $$
\{\oo_i,\oo_j\}=c_{ij}^k\oo_k,\ \{\oo_i,\theta_j\}=0,\ \{\theta_i,\theta_j\}=-c_{ij}^k\theta_k.
 $$


\subsection{\sl Dimension~3: the Heisenberg SR-structure}\label{S2.1}

In dimension 3 the only Carnot group\footnote{Left-invariant SR-structures on 3D Lie groups
are considered in Appendix \ref{A:1}.} is $G=\op{Heis}_3$.
The Carnot algebra is $\mathfrak{heis}_3=\fg_1\oplus\fg_2$ with $\fg_1=\langle e_1,e_2\rangle$, $\fg_2=\langle e_3\rangle$ and the only relation $[e_1,e_2]=e_3$.

The Hamiltonian $H=\frac12(\omega_1^2+\omega_2^2)$ has two integrals:
$I_2=\theta_1$ and the Casimir $I_3=\theta_3=\omega_3$. In the coordinates of Example
from \S\ref{S1}\!\ref{S1.2}
we have: $I_2=p_1$, $I_3=p_3$. These $I_1=H,I_2,I_3$ are involutive and functionally independent,
and so yield Liouville integrability.

There is also a fourth (noncommuting) integral $I_4=\theta_2=p_2+x_1p_3$ confining the motion
to the cylinders $\mathbb{S}^1\times\R^1\subset T^*G=G\times\fg^*$, and making the system
super-integrable.

Actually, for all systems considered in this paper whenever we establish Liouville integrability,
the super-integrability (but not maximal super-integrability) will follow. Indeed, we will always
indicate a right-invariant linear form (commuting with the left-invariant Hamiltonian)
that is functionally independent of the other integrals.

\subsection{\sl Dimension~4: the Engel SR-structure.}\label{S2.2}

In dimension 4 we also have only one SR-structure, related to the well-known Engel structure.

The graded nilpotent Lie algebra is $\fg=\fg_1\oplus\fg_2\oplus\fg_3=\langle e_1,e_2\rangle
\oplus\langle e_3\rangle\oplus\langle e_4\rangle$ with the nontrivial commutators:
 $$
[e_1,e_2]=e_3,\ [e_1,e_3]=e_4.
 $$
The Hamiltonian is $H=\frac12(\omega_1^2+\omega_2^2)$, and $I_2=\z_2$, $I_3=\z_3$, $I_4=\z_4$
together with $I_1=H$ form a complete set of integrals. Adding $I_5=\z_1$ makes the Hamiltonian system
super-integrable (notice though that $I_5$ does not commute with $I_2,I_3$).

In coordinates on $G$ we have\footnote{This and similar formulae are obtained via realization
of the basis $e_i$ as left-invariant vector fields on $G$. For the Engel structure: $e_1=-(\p_{x_1}+x_2\p_{x_3}+x_3\p_{x_4})$,
$e_2=\p_{x_2}$, $e_3=\p_{x_3}$, $e_4=\p_{x_4}$. The right invariant vector fields $f_i$
are such fields on $G$ that commute with $e_j$ and have the same values at the unity of $G$.}:
 $$
2H=(p_1+x_2p_3+x_3p_4)^2+p_2^2,
 $$
and the integrals are:
\begin{align*}
  I_2	&=\theta_2=p_2+x_1p_3+\tfrac12x_1^2p_4, &  I_4 &=\theta_4=p_4;\quad\\
  I_3	&=\theta_3=p_3+x_1p_4, &                  (I_5 &=-\theta_1=p_1).
\end{align*}
Alternatively, to get an involutive set of integrals, we can use the integrals
$J_2=I_5$, $J_3=I_4$ and the Casimir function $J_4=\oo_3^2-2\oo_2\oo_4$:
 $$
J_2=p_1,\ J_3=p_4,\ J_4=p_3^2-2p_2p_4=I_3^2-2I_2I_4.
 $$

The obtained integrals establish Liouville integrability of $H$.

\subsection{\sl Dimension~5: the Cartan SR-structure.}\label{S2.3}

In dimension 5 there are two SR-structures: one on the prolongation of the Engel structure
(a partial case of the Goursat structure, easily seen to be integrable, so we skip it)
and the other related to Cartan's famous $(2,3,5)$
distribution. The Carnot algebra is the positive part of the grading, corresponding to the
first parabolic subalgebra of the exceptional Lie algebra $\op{Lie}(G_2)$:
$\fg=\fg_1\oplus\fg_2\oplus\fg_3=\langle e_1,e_2\rangle
\oplus\langle e_3\rangle\oplus\langle e_4,e_5\rangle$. The nontrivial commutators of $\fg$ are:
 \begin{equation}\label{CartanD}
[e_1,e_2]=e_3,\ [e_1,e_3]=e_4, \ [e_2,e_3]=e_5.
 \end{equation}
The Hamiltonian is $H=\frac12(\omega_1^2+\omega_2^2)$. In terms of right-invariant vector fields and the corresponding
linear functions, we have the following involutive set of integrals:
the Casimir function
 $$
I_2=\z_1\z_5-\z_2\z_4+\tfrac12\z_3^2=\omega_1\omega_5-\omega_2\omega_4+\tfrac12\omega_3^2,
 $$
together with the linear integrals $I_3=\z_3,\ I_4=\z_4,\ I_5=\z_5$.
Again adding $I_6=\z_1$ makes the Hamiltonian system super-integrable
(the next obvious candidate $I_6'=\z_2$ is already functionally dependent
with the previous integrals; they do not commute with $I_2$).

In coordinates on $G$ we have:
 $$
2H=(p_1-\tfrac12x_2p_3-x_1x_2p_4)^2+(p_2+\tfrac12x_1p_3+x_1x_2p_5)^2,
 $$
and with the notation $J_\pm=x_1p_4\pm x_2p_5$ the integrals are:
\begin{align*}
  I_2	&=p_1p_5-p_2p_4+\tfrac12p_3^2+\tfrac12J_{-}^2+\tfrac12p_3J_{+}, & I_4 &=p_4,\\
  I_3	&=p_3, & I_5 &=p_5.
\end{align*}
The additional integral is either
$I_6=p_1+\frac12x_2p_3+(x_3-\frac12x_1x_2)p_4+\frac12x_2^2p_5$ or
$I_6'=p_2-\frac12x_1p_3-\frac12x_1^2p_4+(x_3+\frac12x_1x_2)p_5$.

\section{Discussion: on detecting integrability}\label{S3}

As discussed in the previous section, all left-invariant SR-structures on Carnot groups of dimensions
$<6$ are Liouville integrable, and 6 is the smallest nontrivial dimension from this viewpoint.
Reference~\cite{MSS} provides an example of a non-integrable left-invariant SR-structures on a
Carnot group in 6D. Namely, it is supported on a rank 3 distribution with growth vector $(3,5,6)$
on the flag manifold $\op{SL}(4)/B$, where $B$ is the Borel subgroup. In the respective $|3|$-grading
$\mathfrak{sl}(4)=\sum_{i=-3}^3\mathfrak{g}_i$ with $\mathfrak{b}=\sum_{i=0}^3\mathfrak{g}_i$
the distribution $\Delta$ corresponds to $\mathfrak{g}_{-1}$.
Integrability in loc.cit.\ is understood in algebraic sense, and the obstruction to it
is based on a result of Ziglin (using the monodromy group).

We study SR-structures on rank 2 distributions of other 6D Carnot groups $G$, and especially
ask whether integrability is linked to the maximal symmetry of a structure.
Of course, every symmetry field yields, by the Noether theorem, an integral linear in momenta
(to be called, in what follows, a Noether integral), but the question is if these
(plentiful in our cases) linear integrals suffice for integrability.

Here, we consider two kinds of infinitesimal symmetries. On the one hand, we consider vector
fields whose flow preserves the underlying rank 2 distribution of the SR-structure. On the other hand,
we consider Killing symmetries whose flow, in addition, preserves the SR-metric. In both cases we observe
that the existence of an additional symmetry appears not to be linked to the existence of a complete set of
involutive integrals. In each of our examples, considering a system with $D=\dim(G)$ degrees of freedom,
we will be able to identify $D-1$ involutive integrals (1 Hamiltonian and $D-2$ Noether integrals),
henceforth concentrating on the search for the {\em final} integral.

In dimension $D=6$, we will list all left-invariant SR-structures on Carnot groups and discuss their
integrability. It will be shown that a final integral of low degree does not exists even if $\Delta$ is
the maximally symmetric distribution; on the other hand for the maximally symmetric SR-structure such
an integral does exist. In dimension $D=7$ or 8 we will focus on maximally-symmetric SR-structures.
These have $D+1$ linear independent Killing symmetries (the corresponding Noether integrals are
not in involution). This follows from the Tanaka theory that we review in Appendix \ref{A:2}.
We can also straightforwardly find the Noether integrals by realizing the group law via the
Baker-Campbell-Hausdorff formula and solving the relevant system of PDEs.

Non-integrability, on the other hand, is more difficult to prove.
There are few methods to detect it, depending on the integrability setup.
Analytic non-integrability on a compact manifold follows from positivity of the topological entropy,
see \cite{T} (this is guaranteed if a horseshoe subsystem exists,
which can be obtained via scattering data, see \cite{B}), but this cannot be applied in our case.

Obstructions for algebraic integrability can be found via differential Galois theory \cite{MR},
and the Painlev\'e test. The latter assumes the integrals to be rational in all coordinates.
In contrast, we are interested in first integrals that are smooth in the base coordinates and
polynomial in momenta (such integrals, also known as Killing tensors, have applications in
mathematical physics and general relativity), and the above tests are not applicable.
The method that can detect existence of this type of integrals was
proposed in the work \cite{KM1}. Before describing it in details in the next section,
let us explain the simple idea behind it.

The condition governing existence of an integral of degree $d$ is an overdetermined
system of $\binom{d+D}{D-1}$ linear differential equations on $\binom{d+D-1}{D-1}$ unknown
functions of $D$ variables. It is of finite type, meaning the system is reducible to ODEs.

Checking the explicit compatibility conditions can be cumbersome. Instead, we
compute all differential consequences, i.e.\ take partial derivatives of the PDEs in the system,
and it is enough to take those of total order $\le d+1$, cf.\ \cite{Wo,KM1}.
At this stage, the problem of solving differential equations is reduced to linear algebra: kernel
of the matrix of the system, evaluated at
a fixed point, corresponds to degree $d$ integrals. Some of those are products of apriori known linear
and quadratic integrals, which gives the lower bound on the nullity of the matrix.

The decision on existence of the final integral is thus reduced to the computation of the rank of this
matrix. If the nullity is the minimal possible (decided by the rank), no additional integral exists.

Similar to applications of the Galois theory or Painlev\'e test, our method can be implemented on
a computer. Only integer numbers are involved, so the symbolic computations are exact, containing no
roundings or approximations (with large numbers involved, these computations are more reliable
than a calculation by a human).
Significance of such proofs in mathematics has steadily increased in recent years.

In the next section, we give the technical details of the method, and then use it to prove
Theorems~\ref{Thm6D}, \ref{Thm7D} and \ref{Thm8D} in a mathematically rigorous,
computer-assisted manner.
The reader interested in independent verification of
our computations is invited to download the Maple-code from the supplement to
arXiv:1507.03082v2.

\section{Method to check existence of the final integral}\label{S4}

Similarly to \cite{KM1}, we show that the systems, considered further in this paper,
lack one final integral $F$ required for Liouville
integrability. In all our cases, with $D=\dim G$ degrees of freedom, we have $D-2$ commuting
Noether integrals that we normalize to be $p_3,\dots,p_D$.

This reduces the system to a $(D-2)$-parametric Hamiltonian system with $2$ degrees of freedoms
(simply let $p_i=\op{const}_i$). It means that the unknowns (=coefficients of $F$) depend on 2
variables only.

We however do not perform the reduction until Section \ref{S8},
and keep the momenta $p=(p_1,\dots,p_D)$ in the general form.
The Hamiltonian thus takes the form $H=H(x_1,x_2,p_1,\dots,p_D)$.

A first integral that is smooth by the base variables $x=(x_1,x_2)$ and polynomial in momenta of
degree~$d$, and that commutes with the Noether integrals $p_3,\dots,p_D$, has the form

 \begin{equation}\label{eqn:integral-F}
F=\sum_{|\tau|=d}a_\tau(x_1,x_2)\,p^\tau.
 \end{equation}
($p^\tau=\prod_{i=1}^Dp_i^{\tau_i}$ for a multi-index $\tau=(\tau_1,\dots,\tau_D)$ and $|\tau|=\sum_{i=1}^D\tau_i$).
The requirement that functions of the form~$F$ are integrals is given by the Poisson bracket relation $\{H,F\}=0$, which is a homogeneous polynomial in momenta of degree $d+1$.
Thus we have $\binom{d+D}{D-1}$ first order PDE (coefficients of $\{H,F\}$) on
$\binom{d+D-1}{D-1}$ unknowns (coefficients $a_\tau$ of $F$).
Let us call this linear PDE system $S_d$.

\subsection{\sl The bounds on the number of integrals.}\label{S4.1}

Instead of treating the differential system $S_d$, we consider the associated system of linear equations,
given by fixing a point $o\in G$. Namely, denote by $S_d^{(k)}$ the $k$-th prolongation of $S_d$.
This is the system obtained by differentiating the PDEs from $S_d$ by $x_1,x_2$ up of total order
$\le k$. The total number of the equations is consequently
$m_{d,k}=\binom{d+D}{D-1}\!\cdot\!\binom{k+2}2$.

The unknowns are now the derivatives $a^o_{\tau;\sigma}=\partial_\sigma a_\tau(o)$ (jets)
with multi-indices $\tau=(\tau_1,\dots,\tau_D)$ and $\sigma=(\sigma_1,\sigma_2)$
(of different size) of lengths $|\tau|=d$ and $|\sigma|\le k$. The collection of the unknowns
is denoted $V=V_d^{(k)}$, and represented as a column-vector. Their number, denoted $\#V$
(the height of this vector), is equal to $n_{d,k}=\binom{d+D-1}{D-1}\!\cdot\!\binom{k+3}2$.

The system $S_d^{(k)}$ evaluated at $o\in G$ has the form $M\cdot V=0$
with some $m_{d,k}\times n_{d,k}$ matrix $M=M_d^{(k)}$.
Let $\Lambda_d$ be the number of linearly independent first integrals of degree $d$.
We obviously have the upper bound, in which the right hand side stabilizes for $k=d+1$
(cf.\ \cite{Wo}):
 \begin{equation}\label{eqn:upper-bound}
\Lambda_d\le\d_{d}^{(k)}:=\# V_d^{(k)}-\op{rank}M_d^{(k)}.
 \end{equation}
Due to this stabilization let us denote in what follows:
$\d_d=\d_{d}^{(d+1)}$, $V_d=V_d^{(d+1)}$ and $M_d=M_d^{(d+1)}$.

On the other hand, our system possesses integrals $I_1=H$ (quadratic: $d_1=1$) and
$I_2=p_3$, \dots, $I_{D-1}=p_D$ (linear: $d_i=1$, $1<i<D$). The derived integrals
$\prod I_i^{m_i}$ of degree $\sum m_id_i=d$ will be called {\em trivial}. Thus we deduce the
lower bound
 \begin{equation}\label{Lambda}
\Lambda_d\ge\Lambda_d^0:=\sum_{i=0}^{[d/2]}\binom{d-2i+D-3}{D-3}.
 \end{equation}
If we show that the bounds in \eqref{eqn:upper-bound} and \eqref{Lambda} coincide,
$\Lambda_d^0=\d_{d}$, then we conclude that $\Lambda_d=\Lambda_d^0$,
and so all integrals of degree $d$ are trivial (reduced to already established Noether integrals).
This gives non-existence of the final integral in degree $d$.

\subsection{\sl The procedure.}\label{S4.2}

There are two important differences to \cite{KM1} that facilitate our computation.
First, our model is homogeneous, so the choice of point is not essential
(in the general case we have to choose a generic point, which gives the stable values of $\d_{d,k}$).
We always choose $o=(0,0)$ in the plane $\R^2(x_1,x_2)$.
Second, our Hamiltonian $H$ (scaled by some integer factor that is at most 288)
is a polynomial with integer coefficients, so we do not need to handle rational expressions.

Most complications are related to the calculation of $\op{rank}(M)$. The large size of the matrix makes
Gaussian elimination costly (the dimension of the configuration space in our computations
$D=6,7,8$ exceeds $\dim=4$ of \cite{KM1}).
But the matrix contains many zeros, and further simplifications are possible as follows:

1. In all equations all coefficients are kept integers at all stages by multiplying by their common
denominator. Proportional equations (rows of $M$) are removed.

2. We have the freedom to add constant multiples of trivial integrals to $F$ defined in~\eqref{eqn:integral-F}. At the point $o$ the corresponding unknowns can therefore be cleared from the equations.
Let $V_\text{spfl}\subset V$ embrace all (superfluent) unknowns that do not appear in the equations.
We remove the corresponding zero columns from M. We have: $\# V_\text{spfl}\geq\Lambda_d^0$.

3. Perform a partial solution of the system, iteratively solving the \textit{monomial} and
\textit{bimonomial} equations until no more such equations remain.
Let $V_\text{mon}$ and $V_\text{bimon}$ be the corresponding unknowns.

Denote the matrix of the reduced system (obtained from $M_d$ by the above simplifications) by
$M_\text{red}$, and let $V_\text{red}=V_d\setminus(V_\text{spfl}\cup V_\text{mon}\cup V_\text{bimon})$.
The reduced system then reads $M_\text{red}\cdot V_\text{red}=0$, and we get the formula
 \begin{multline*}
\d_d=\# V_d-\op{rk}(M_d)=(\# V_\text{red}+\# V_\text{mon}+\# V_\text{bimon}+\# V_\text{spfl})\\
-(\op{rk}(M_\text{red})+\# V_\text{mon}+\# V_\text{bimon})
=\# V_\text{red}+\# V_\text{spfl}-\op{rk}(M_\text{red}).
 \end{multline*}
Notice that $\# V_\text{red}=\op{rk}(M_\text{red})$ and $\# V_\text{spfl}=\Lambda_d^0$
imply $\delta_d=\Lambda_d^0$.

\subsection{\sl The modular approach}\label{S4.3}

Our algorithm confirms non-existence of the final integral of degree $d$ when
$\d_{d}=\Lambda_d^0$. The right hand side is given by \eqref{Lambda}, while
the left hand side depends on $\op{rank}M_d$ as in \eqref{eqn:upper-bound}.

To reduce the rank computation we work {\em modulo} $p$ for a prime $p$.
Denote by $L[p]$ the matrix obtained from a matrix $L$ by passing to mod $p$ entries.
Since for a square matrix $L$ we have $\det(L[p])=(\det L)\,\op{mod}\,p$, we conclude that
 \begin{equation}\label{modp}
\op{rank}M_d[p]\leq \op{rank}M_d.
 \end{equation}
For some specific values of $p$ the rank can actually decrease upon computing modular,
but for sufficiently large primes $p$ we have equality in \eqref{modp}.
Thus, if for some prime $p$
 $$
\d_d[p]:=\# V_d-\op{rank}M_d[p]=\Lambda_d^0,
 $$
then we conclude non-existence of the final integral of degree $d$.

The main complication is however to find such $p$. Our experiments show that the decisive $p$
grows fast with $D$. So even though these modular computations
are cheaper for every particular $p$, going successively by increasing primes actually increases
the computation time. Yet, a choice of a random increasing sequence of $p$ turns out to be useful.


\section{Left-invariant SR-structures in dimension 6}\label{S5}

In this section we show a certain type of non-integrability for a rank~2 left-invariant distribution on a
6D Carnot group $G$. Every such 2-distribution $\Delta$ is encoded as the space $\fg_{-1}$
in the corresponding graded nilpotent Lie algebra $\fg$.

In 6D the growth vector is $(2,3,5,6)$ (recall we assumed that $\Delta$ is not a prolongation
of another rank 2 distribution),
and every such Lie algebra $\fg$ is a central 1D extension of the Cartan algebra from
Section \ref{S2}\!\ref{S2.3}, the distribution also being an integrable extension \cite{AK}.

Thus $\fg=\fg_1\oplus\fg_2\oplus\fg_3\oplus\fg_4=\langle e_1,e_2\rangle
\oplus\langle e_3\rangle\oplus\langle e_4,e_5\rangle\oplus\langle e_6\rangle$
has first commutators as in \eqref{CartanD}, which should be accompanied by
the brackets $\fg_1\otimes\fg_3\to\fg_4$. This leads to precisely three algebras,
called elliptic, parabolic and hyperbolic\footnote{The (2,3,5,6)-distributions are given by
a conformal quadric on $\fg_1$ due to conformal identification $\op{ad}_{\fg_2}:\fg_1\simeq\fg_3$,
whence elliptic, parabolic and hyperbolic.} in \cite{AK}.
We will study them in turn.

\subsection{\sl Integrability of the maximally symmetric elliptic SR-structure}\label{S5.1}

The elliptic (2,3,5,6)-distribution has the following structure equations:
 $$
[e_1,e_2]=e_3,\ [e_1,e_3]=e_4, \ [e_2,e_3]=e_5,\ [e_1,e_4]=e_6,\ [e_2,e_5]=e_6.
 $$
Its symmetry algebra has dimension 8 \cite{AK}, and it is not maximally symmetric as
a 2-distribution, but it supports the maximally symmetric SR-structure. Namely, defining the SR structure
by the orthonormal frame $e_1,e_2$, we conclude that its symmetry dimension is 7
(see Appendix \ref{A:2}). The corresponding Hamiltonian is
 $$
2H=\bigl(p_1-\tfrac12x_2p_3-x_1x_2p_4-\tfrac12x_1^2x_2p_6\bigr)^2+
\bigl(p_2+\tfrac12x_1p_3+x_1x_2p_5+\tfrac12x_1x_2^2p_6\bigr)^2.
 $$
There are two Casimir functions $I_6=\omega_6$ and $C=\frac12(\omega_4^2+\omega_5^2)-\omega_3\omega_6$.
For the maximally symmetric Hamiltonian $H=\frac12(\omega_1^2+\omega_2^2)$,
these together with $I_1=H$, $I_2=\oo_1\oo_5-\oo_2\oo_4+\frac12\oo_3^2$ and the right-invariant linear functions
$I_3=\theta_3$, $I_4=\theta_4$, $I_5=\theta_5$ and $I_6$ form 6 involutive integrals ($C=\frac12(I_4^2+I_5^2)-I_3I_6$),
so this system is Liouville integrable.
Notice that $I_2'=\theta_1\theta_5-\theta_2\theta_4+\frac12\theta_3^2$ is also an integral, and $I_2-I_2'=I_6\cdot K$,
where $K$ is the last Killing vector field (neither $I_2'$ nor $K$ commute with $I_1,\dots,I_6$,
but they make the system super-integrable).

In coordinates: $I_3=p_3$, $I_4=p_4$, $I_5=p_5$, $I_6=p_6$ and
 \begin{multline*}
I_2=
(p_1-\tfrac12x_2p_3-x_1x_2p_4-\tfrac12x_1^2x_2p_6)(p_5+x_2p_6)\\
-(p_2+\tfrac12x_1p_3+x_1x_2p_5+\tfrac12x_1x_2^2p_6)(p_4+x_1p_6)\\
+\tfrac12(p_3+x_1p_4+x_2p_5+\tfrac12(x_1^2+x_2^2)p_6)^2.
 \end{multline*}

\subsection{\sl Non-integrability of the parabolic SR-structure}\label{S5.2}

The parabolic (2,3,5,6)-distribution is given by the structure equations:
 $$
[e_1,e_2]=e_3,\ [e_1,e_3]=e_4, \ [e_2,e_3]=e_5,\ [e_1,e_4]=e_6.
 $$
With its 11-dimensional symmetry algebra it is the maximally symmetric non-holonomic rank 2
distribution in 6D, see \cite{DZ,AK}.

Up to equivalence there is only one left-invariant SR-structure
(this follows from the fact that the Tanaka prolongation $\hat\fg$ of the Carnot algebra $\fg$
has $\hat\fg_0\subset\mathfrak{gl}(\fg_1)$ equal to the Borel subalgebra, and the corresponding
group transforms the invariant SR-structures), and it is given
by the orthonormal frame $e_1,e_2$ (the symmetry dimension of this SR-structure is 6,
and so it is not maximally symmetric). The corresponding Hamiltonian
$H=\frac12(\omega_1^2+\omega_2^2)$ has the coordinate form
 \begin{equation}\label{6Dp}
2H=\bigl(p_1-\tfrac12x_2p_3-x_1x_2p_4-\tfrac12x_1^2x_2p_6\bigr)^2+
\bigl(p_2+\tfrac12x_1p_3+x_1x_2p_5\bigr)^2.
 \end{equation}
There are two Casimir functions $\omega_5=\theta_5,\omega_6=\theta_6$, and
two additional Noether integrals $\theta_3=p_3$, $\theta_4=p_4$, that form an
involutive family $I_2=p_3,I_3=p_4,I_4=p_5,I_5=p_6$.
However no other Casimirs or commuting linear integrals exist.

In search of more complicated integrals we perform the computations for the
final (6th) integral of degree $d$ and arrive at the following result.

 \begin{theorem}\label{Thm6D}
The final integral of degree $d\le6$ for the Hamiltonian  (\ref{6Dp}) of the left-invariant SR-structure
on the parabolic (2,3,5,6)-distribution does not exist.
 \end{theorem}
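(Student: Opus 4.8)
The plan is to apply the computer-assisted method of Section~\ref{S4} directly to the Hamiltonian~\eqref{6Dp} with $D=6$. Here the commuting Noether integrals are the four right-invariant linear functions $\theta_3=p_3$, $\theta_4=p_4$, $\theta_5=p_5$, $\theta_6=p_6$, so the candidate final integral $F$ has the form~\eqref{eqn:integral-F} with coefficients $a_\tau$ depending only on $(x_1,x_2)$. First I would expand the Poisson bracket $\{H,F\}$, a homogeneous polynomial of degree $d+1$ in $p$, and collect its monomial coefficients to obtain the linear PDE system $S_d$ on the $a_\tau$. Because~\eqref{6Dp} is polynomial in the momenta with polynomial (in $x_1,x_2$) coefficients, every equation of $S_d$ becomes integer after clearing denominators (the scaling factor being small, cf.\ \S\ref{S4.2}).

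The second step is to prolong $S_d$ up to total order $d+1$ and evaluate at the homogeneous point $o=(0,0)$, producing the integer matrix $M_d=M_d^{(d+1)}$ of size $m_{d,d+1}\times n_{d,d+1}$. By~\eqref{eqn:upper-bound} we then have $\La_d\le\d_d=\# V_d-\op{rank}M_d$. On the other hand, the trivial integrals here are the products $H^{i}\,p_3^{b_3}p_4^{b_4}p_5^{b_5}p_6^{b_6}$ with $2i+b_3+b_4+b_5+b_6=d$, which gives exactly the lower bound~\eqref{Lambda} with $D=6$, namely $\La_d^0=\sum_{i=0}^{[d/2]}\binom{d-2i+3}{3}$. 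The whole argument thus reduces to verifying, for each of the finitely many degrees $d\le6$, the single numerical equality $\d_d=\La_d^0$, which by the conclusion of \S\ref{S4.1} forces $\La_d=\La_d^0$ and hence rules out any nontrivial final integral.

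The computational heart, and the main obstacle, is the rank of $M_d$: its size grows rapidly in $d$, and for $d=6$ it is by far the largest, so a naive Gaussian elimination over $\Q$ or $\Z$ is not feasible. To make this tractable I would apply the three simplifications of \S\ref{S4.2} --- delete proportional rows, clear the $\La_d^0$ superfluent columns $V_\text{spfl}$ coming from the freedom of adding trivial integrals, and iteratively eliminate the monomial and bimonomial unknowns $V_\text{mon},V_\text{bimon}$ --- reducing to $M_\text{red}\cdot V_\text{red}=0$ with $\d_d=\# V_\text{red}+\# V_\text{spfl}-\op{rk}(M_\text{red})$. The target equality is then equivalent to the two conditions $\# V_\text{red}=\op{rk}(M_\text{red})$ and $\# V_\text{spfl}=\La_d^0$.

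Finally, to avoid integer blow-up during the rank computation I would pass to a prime field as in \S\ref{S4.3}, using~\eqref{modp}: since $\op{rank}M_d[p]\le\op{rank}M_d$, any prime $p$ for which $\d_d[p]=\La_d^0$ already certifies the non-existence result, because it simultaneously forces equality in~\eqref{modp} and in~\eqref{eqn:upper-bound}--\eqref{Lambda}. The delicate point is the choice of $p$: for small primes the modular rank can drop spuriously (yielding only $\d_d[p]\ge\La_d^0$), so I would test an increasing random sequence of primes until the equality $\d_d[p]=\La_d^0$ is reached. Carrying this out for each $d\le6$ gives a rigorous proof using only exact integer arithmetic, establishing the theorem.
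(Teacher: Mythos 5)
Your proposal is correct and follows essentially the same route as the paper: form the prolonged linear system at the homogeneous point $o=(0,0)$, compute the rank of the (reduced) integer matrix, and certify non-existence by checking $\d_d=\La_d^0$. The only minor deviations are that the paper shortcuts the degrees $d<6$ by observing that a nontrivial integral $I$ of degree $d<6$ would produce the nontrivial degree-6 integral $I\cdot p_6^{6-d}$, so only $d=6$ needs testing, and that for this 6D case the paper evaluates the rank exactly over the integers rather than modulo a prime --- though your modular variant is equally rigorous by the one-sided inequality \eqref{modp}.
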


\begin{proof}
First let us notice that it is enough to prove non-existence of a nontrivial integral $I_6$ of degree 6.
Indeed, if a nontrivial integral $I$ of degree $d<6$ exists, then $I\cdot p_6^{6-d}$
is a non-trivial integral of degree $6$.

Therefore we shall apply the procedure described in Section~\ref{S4} to our system with $d=6$
only\footnote{In fact, we run the test for $1\leq d\leq5$ as well, confirming the same result.}.
For sextic integrals, seven prolongations need to be performed in order to achieve equality for
the upper bound $\d_6=\d_6^{(7)}$. Our computation gives:
\smallskip

\begin{center}
\begin{tabular}{|c|c|c|c|c|c|c|c|}
\hline
\# all eqns & \# $V_6$ & \# eqns $M_\text{red}$ & \#$V_\text{red}$ & rk($M_\text{red})$ & $\d_6$ \\
\hline
28512 & 20790 & 11816 & 9155 & 9155 & 130 \\
\hline
\end{tabular}
\end{center}
\smallskip

The last number $\d_6$ coincides with the number of trivial integrals $\Lambda_6^0=130$,
and hence by the discussion in \S\ref{S4} there is no integral of degree 6, which is independent of
and commuting with $I_2,\dots,I_5$.
\end{proof}

\subsection{\sl Hyperbolic and other elliptic SR-structures in 6D}\label{S5.3}

The hyperbolic rank 2 distribution with growth vector $(2,3,5,6)$ has the following structure equations:
 $$
[e_1,e_2]=e_3,\ [e_1,e_3]=e_4, \ [e_2,e_3]=e_5,\ [e_1,e_5]=e_6,\ [e_2,e_4]=e_6.
 $$
The Hamiltonian corresponding to orthonormal frame $e_1,e_2$ is
 $$
2H=\bigl(p_1-\tfrac12x_2p_3-x_1x_2p_4-\tfrac14x_1x_2^2p_6\bigr)^2+
\bigl(p_2+\tfrac12x_1p_3+x_1x_2p_5+\tfrac14x_1^2x_2p_6\bigr)^2.
 $$
There are two Casimir functions $I_6=\omega_6$ and $C=\omega_4\omega_5-\omega_3\omega_6$.
For the Hamiltonian $H=\frac12(\omega_1^2+\omega_2^2)$ these two together with $I_1=H$ and the
integrals $I_3=\theta_3$, $I_4=\theta_4$, $I_5=\theta_5$ ($\theta_6=\omega_6$)
form 6 involutive integrals, but they are functionally dependent ($C=I_4I_5-I_3I_6$).

The most general left-invariant SR-structure on both the elliptic and the hyperbolic (2,3,5,6)-distributions
can be brought into the form $2H=\omega_1^2+(a\,\omega_1+b\,\omega_2)^2$, $b\neq0$, with
the same 4 Noether integrals. However no other Casimirs or commuting linear integrals exist.

In all these cases (except the elliptic case with $a=0$, $b=1$) the system is neither SR-maximally
symmetric (the symmetry algebra has $\dim=6$), nor maximally symmetric as a distribution
(the symmetry algebra has $\dim=8$).

In all these cases the search for the final integral reduces to the same problem.
We can apply the machinery used in Theorem \ref{Thm6D}, and the computations show the
same non-existence result (in all cases except the elliptic $a=0$, $b=1$).
This non-existence of low degree integrals suggests that these Hamiltonians are not integrable.

\section{Maximally symmetric SR-structures in dimension 7}\label{S6}

A non-integrability effect established in the previous section happens also in higher dimensions.
We noted that the parabolic distribution $\Delta$ in 6D is maximally symmetric, but
for the left-invariant parabolic SR-structure $(\Delta,g)$ in 6D the symmetry algebra of $(\Delta,g)$
is minimal possible: the algebra of left-translations $\fg$.

In general, the symmetry algebra of a left-invariant SR-structure $(\Delta,g)$ on a Carnot group $G$
is a graded Lie algebra $\tilde\fg$ and it contains the Lie algebra of $G$, namely
$\fg=\fg_{1}\oplus\dots\fg_{\nu}\subset\tilde\fg$. The additional part is contained at most in the zero
grading\footnote{We provide a simple proof of this fact in Appendix \ref{A:2}.}:
$\tilde\fg/\fg=\tilde\fg_0$ \cite{Mo}. Clearly this piece is at most 1-dimensional
$\tilde\fg_0\subset\mathfrak{so}(\fg_{1},g)$.

Thus $\dim\op{Sym}(\Delta,g)\le\dim\fg+1$. The equality is attained if the rotation endomorphism
$\phi\in\mathfrak{so}(\fg_{1},g)$ extends (uniquely) to a grading preserving derivation of $\fg$.
Let us investigate if such a \textit{maximally symmetric} left-invariant SR-structure on a Carnot
group is integrable.

In 6D the only maximally symmetric SR-structure is the (unique up to scale) SR-structure on the
elliptic (2,3,5,6)-distribution (with $\dim\op{Sym}=7$) and it is integrable. Consider the case $\dim G=7$.

Here the only maximally symmetric SR-structure $g$ on a rank 2 distribution $\Delta$ on a 7D Carnot group $G$
(that is not a prolongation from lower dimensions) with $\dim\op{Sym}(\Delta,g)=8$ has growth vector
(2,3,5,7) and the following structure equations\footnote{These are obtained from the (2,3,5,6) parabolic
distribution by the central extension technique of \cite{AK}.}
of the graded nilpotent Lie algebra $\fg=\op{Lie}(G)$:
 \begin{equation}\label{ell7D2}
  \begin{array}{lcr}
&[e_1,e_2]=e_3,\ [e_1,e_3]=e_4,\ [e_2,e_3]=e_5,&\\
&[e_1,e_4]=-[e_2,e_5]=e_6,\ [e_1,e_5]=[e_2,e_4]=e_7.\vphantom{\dfrac{A}{A}}&
 \end{array}
  \end{equation}
Here $H=\frac12(\oo_1^2+\oo_2^2)$ and $\fg_0=\langle e_2\ot\oo_1-e_1\ot\oo_2\rangle$.
There are 3 Casimir functions $\oo_6$, $\oo_7$ and
$\oo_3(\oo_6^2+\oo_7^2)-\frac12(\oo_4^2-\oo_5^2)\oo_6-\oo_4\oo_5\oo_7$.
The involutive family of integrals $\theta_3,\dots,\theta_7$ generates these Casimirs
and together with the Hamiltonian they lack 1 more integral for Liouville integrability.
In local coordinates, we have
 \begin{multline}\label{7De}
2H=\left(p_1-\tfrac12x_2p_3-x_1x_2p_4-\tfrac12x_1^2x_2p_6-\tfrac14x_1x_2^2p_7\right)^2\\
+\left(p_2+\tfrac12x_1p_3+x_1x_2p_5-\tfrac12x_1x_2^2p_6+\tfrac14x_1^2x_2p_7\right)^2,
 \end{multline}
and the integrals are $I_2=p_3$, \dots, $I_6=p_7$. Looking for the final integral $I_7$, we
again invoke the method of Section \ref{S4} to obtain:

 \begin{theorem}\label{Thm7D}
The final integral of degree $d\le6$ for the Hamiltonian (\ref{7De}) of the left-invariant SR-structure
on the (2,3,5,7)-distribution given by \eqref{ell7D2} does not exist.
 \end{theorem}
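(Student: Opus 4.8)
The plan is to follow the argument of Theorem \ref{Thm6D} verbatim, now applied to the $D=7$ Hamiltonian (\ref{7De}). First I would reduce to a single degree: since $p_7=I_6$ is one of the commuting Noether integrals, any nontrivial integral $I$ of degree $d<6$ would produce a nontrivial integral $I\cdot p_7^{6-d}$ of degree exactly $6$, so it suffices to rule out a nontrivial final integral in degree $d=6$.

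Next I would apply the procedure of Section \ref{S4} with $D=7$, $d=6$. One writes $F=\sum_{|\tau|=6}a_\tau(x_1,x_2)\,p^\tau$ with $\tau=(\tau_1,\dots,\tau_7)$, imposes $\{H,F\}=0$ for $H$ from (\ref{7De}), and prolongs the resulting linear PDE system $S_6$ by $x_1,x_2$ up to total order $k=d+1=7$, producing the matrix equation $M_6\cdot V_6=0$ evaluated at $o=(0,0)$. The a priori counts are substantially larger than in 6D: the number of unknowns is $\#V_6=\binom{12}{6}\binom{10}{2}=41580$ and the number of equations is $\binom{13}{6}\binom{9}{2}=61776$, while the lower bound from trivial integrals is
$$
\La_6^0=\sum_{i=0}^{3}\binom{10-2i}{4}=\binom{10}{4}+\binom{8}{4}+\binom{6}{4}+\binom{4}{4}=296.
$$
The objective is to certify $\d_6=\#V_6-\op{rank}M_6=\La_6^0=296$, which by \S\ref{S4} forces every degree-6 integral to be trivial and so proves non-existence of the final integral.

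The main obstacle is the rank computation for $M_6$, whose size grows sharply with $D$ and makes exact Gaussian elimination prohibitive. I would first apply the three simplifications of \S\ref{S4.2} --- clearing the superfluent columns, deleting proportional rows, and iteratively solving the monomial and bimonomial equations --- to descend to the much smaller matrix $M_\text{red}$, aiming to verify $\#V_\text{red}=\op{rk}(M_\text{red})$ together with $\#V_\text{spfl}=\La_6^0$. The decisive step is then the modular reduction of \S\ref{S4.3}: I would compute $\op{rank}M_6[p]$ for a prime $p$ and search for one yielding $\d_6[p]=\La_6^0$, concluding via $\op{rank}M_6[p]\le\op{rank}M_6$. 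The genuinely delicate point, flagged in \S\ref{S4.3}, is that the decisive prime grows quickly with $D$, so marching through small primes is counterproductive; a randomized increasing sequence of primes is the practical remedy. Once such a $p$ is found, $\d_6=\La_6^0=296$ completes the argument exactly as for Theorem \ref{Thm6D}.
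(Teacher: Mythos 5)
Your proposal is correct and follows essentially the same route as the paper: the same counts ($\#V_6=41580$, $61776$ equations, $\Lambda_6^0=296$), the same reduction pipeline from \S\ref{S4}, and the same modular rank certificate (the paper reports that $p=101$ suffices). The only organizational difference is that you reduce all degrees $d\le6$ to the single case $d=6$ via multiplication by $p_7^{6-d}$ (the trick the paper uses in Theorem \ref{Thm6D}), whereas the paper additionally runs an exact, non-modular verification for $d\le5$ before invoking the modular computation for $d=6$.
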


 \begin{proof}
We perform the same computations as in the proof of Theorem~\ref{Thm6D}.
In this case, our computer capacities allow us to study polynomial integrals up to degree $d=5$.
We need six prolongations to arrive at a definite conclusion, which is presented in the table:
\smallskip

\begin{center}
\begin{tabular}{|c|c|c|c|c|c|c|c|}
\hline
\# all eqns & \# $V_5$ & \# eqns $M_\text{red}$ & \#$V_\text{red}$ & rk($M_\text{red}$) & $\d_5$ \\
\hline
25872 & 16632 & 9397 & 6993 & 6993	& 166 \\
\hline
\end{tabular}
\end{center}
\smallskip
Since the number $\d_5=\delta_{5}^{(6)}$ coincides with the number of trivial integrals
$\Lambda^0_5=166$, we conclude absence of the final integral of degree $d\le5$.

To handle the case of degree $d=6$, we use the modular approach, described in Section \ref{S4}\!\ref{S4.3}.
The computation concludes faster, but to reach a definite answer we need a suitably large prime.
In our case $p=101$ suffices and we obtain the following result:
\smallskip

\begin{center}
\begin{tabular}{|c|c|c|c|c|c|c|c|}
\hline
\# all eqns & \# $V_6$ & \# eqns $M_\text{red}$ & \#$V_\text{red}$ & rk($M_\text{red}$) & $\d_6[101]$ \\
\hline
61776 & 41580 & 19137 & 15848 & 15848 & 296 \\
\hline
\end{tabular}
\end{center}
\smallskip
This computation implies $\d_d[p]=\Lambda_d^0$, and we conclude non-existence of the
final integral of degree $d\le6$.
 \end{proof}

 \begin{rk}
The indicated $p$ for $d=6$ is not claimed to be the minimal possible. But search for the minimal $p$
requires more computer time. For instance, with $d=5$ the computation for $d=5$
gives $\d_d[p]>\Lambda_d^0$ for the primes $p=2,3,\dots,29$, and we obtain equality
(implying non-existence of degree 5 integral) for the next primes $p=31$, $37$ and $41$.
 \end{rk}

\section{On integrability of SR-structures in dimension 8}\label{S7}

There are two SR-structures $g$ on a rank 2 distribution $\Delta$ on a 8D Carnot group $G$
(that is not a prolongation from lower dimensions) with $\dim\op{Sym}(\Delta,g)=9$: one with the growth vector
(2,3,5,6,8) and the other with the growth vector (2,3,5,8). The distributions are obtained
by central extension from 7D as in \cite{AK},
and we take the (unique up to scale) $\mathfrak{so}(2)$-symmetric metric~$g$
(in the cases, when $\fg_0\supset\mathfrak{so}(2)$).

The second SR-structure $(\Delta,g)$ has a more symmetric underlying distribution (with the symmetry
dimension 12 vs.\ 10), but it is the first one that is integrable.

\subsection{\sl The (2,3,5,6,8) SR-structure.}\label{S7.1}

The structure equations of the algebra $\fg=\op{Lie}(G)
=\fg_1\oplus\dots\oplus\fg_5=\langle e_1,e_2\rangle\oplus\langle e_3\rangle\oplus\langle e_4,e_5\rangle
\oplus\langle e_6\rangle\oplus\langle e_7,e_8\rangle$
are the following:
 \begin{gather*}
[e_1,e_2]=e_3,\
[e_{1,2},e_3]=e_{4,5},\ [e_1,e_4]=[e_2,e_5]=e_6,\\
[e_{1,2},e_6]=e_{7,8},\ [e_3,e_{4,5}]=i\,e_{7,8},
 \end{gather*}
where we use complex notations $e_{a,b}=e_a+i\,e_b$. In this form it is obvious that the
action of $\op{SO}(2)$ on $\fg$, composed of the standard action on $\fg_1$, $\fg_3$, $\fg_5$
and the trivial action on $\fg_2$, $\fg_4$, is an automorphism.

The left-invariant Hamiltonian $H=\frac12(\oo_1^2+\oo_2^2)$ has 5 commuting right-invariant
Killing fields (integrals) $I_2=\theta_4$, $I_3=\theta_5$, $I_4=\theta_6$, $I_5=\theta_7$, $I_6=\theta_8$. In addition, there are 2 Casimir functions
 \begin{gather*}
I_7=\oo_1\oo_8-\oo_2\oo_7+\oo_3\oo_6-\frac{\oo_4^2+\oo_5^2}2=
\theta_1\theta_8-\theta_2\theta_7+\theta_3\theta_6-\frac{\theta_4^2+\theta_5^2}2,\\
C=\oo_4\oo_7+\oo_5\oo_8-\tfrac12\oo_6^2=\theta_4\theta_7+\theta_5\theta_8-\tfrac12\theta_6^2,
 \end{gather*}
of which the second is dependent on $I_2,\dots,I_7$. Yet we have one more quadratic integral
 $$
I_8=\oo_1\oo_5-\oo_2\oo_4+\tfrac12\oo_3^2,
 $$
and it is straightforward to check that the involutive integrals $I_1=H,I_2,\dots,I_8$
are functionally independent almost everywhere on $T^*G$.
Consequently, the considered SR-structure is Liouville integrable.
Notice that $I_8'=\theta_1\theta_5-\theta_2\theta_4+\tfrac12\theta_3^2$ is different from $I_8$ and
is also an integral of $H$, which again manifests super-integrability.

In coordinates, denoting $\sigma^2=x_1^2+x_2^2$, we have
 \begin{gather*}
\oo_1=p_1-\tfrac12x_2p_3-x_1x_2p_4-\tfrac12x_1^2x_2p_6
-\tfrac15(\sigma^2+x_2^2)x_3p_7+\tfrac15x_1x_2x_3p_8,\\
\oo_2=p_2+\tfrac12x_1p_3+x_1x_2p_5+\tfrac12x_1x_2^2p_6+\tfrac15x_1x_2x_3p_7
-\tfrac15(x_1^2+\sigma^2)x_3p_8,\\
\oo_3=p_3+x_1p_4+x_2p_5+\tfrac{\sigma^2}2\,p_6+(\tfrac{\sigma^2}{10}x_1+x_2x_3)p_7
+(\tfrac{\sigma^2}{10}x_2-x_1x_3)p_8,\\
\oo_4=p_4+x_1p_6+\tfrac12x_1^2p_7+(\tfrac12x_1x_2-x_3)p_8,\\
\oo_5=p_5+x_2p_6+(\tfrac12x_1x_2+x_3)p_7+\tfrac12x_2^2p_8,\\
\oo_6=p_6+x_1p_7+x_2p_8,\ \oo_7=p_7,\ \oo_8=p_8,
 \end{gather*}
and $\theta_i=p_i$ for $4\leq i\leq8$; the formulae for $I_i$ follow.

\subsection{\sl The (2,3,5,8) SR-structure.}\label{S7.2}

The free truncated graded nilpotent Lie algebra
$\fg=\fg_1\oplus\dots\oplus\fg_4=\langle e_1,e_2\rangle\oplus\langle e_3\rangle\oplus\langle e_4,e_5\rangle
\oplus\langle e_6,e_7,e_8\rangle$ with the structure equations
 \begin{gather*}
[e_1,e_2]=e_3,\ [e_1,e_3]=e_4,\ [e_2,e_3]=e_5,\\
[e_1,e_4]=e_6,\ [e_1,e_5]=[e_2,e_4]=e_7,\ [e_2,e_5]=e_8
 \end{gather*}
was also studied in \cite{Sa}. The left-invariant Hamiltonian $H=\frac12(\oo_1^2+\oo_2^2)$ has
6 commuting right-invariant Killing fields, leading to Noether integrals
$I_2=\theta_3$, $I_3=\theta_4$, $I_4=\theta_5$, $I_5=\theta_6$, $I_6=\theta_7$, $I_7=\theta_8$.
In addition, there is 1 cubic Casimir function, but it depends on the linear integrals.

Thus we again lack one final integral for integrability. To set up its computation
we write the Hamiltonian in local coordinates:
 \begin{multline}\label{8De}
2H=\left(p_1-\tfrac12x_2p_3-\tfrac12(x_1^2+x_2^2)p_5-\tfrac14x_1x_2^2p_7-\tfrac16x_2^3p_8\right)^2\\
+\left(p_2+\tfrac12x_1p_3+\tfrac12(x_1^2+x_2^2)p_4+\tfrac16x_1^3p_6+\tfrac14x_1^2x_2p_7\right)^2.
 \end{multline}


 \begin{theorem}\label{Thm8D}
The final integral of degree $d\le5$ for the Hamiltonian (\ref{8De}) of the left-invariant SR-structure
on the (2,3,5,8)-distribution does not exist.
 \end{theorem}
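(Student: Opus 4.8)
The plan is to apply verbatim the computer-assisted rank computation developed in Section~\ref{S4} and already executed for Theorems~\ref{Thm6D} and~\ref{Thm7D}, now specialized to the Hamiltonian~\eqref{8De} with $D=8$. By the argument opening the proof of Theorem~\ref{Thm6D}, it suffices to treat the top degree $d=5$: if a nontrivial integral of degree $d<5$ existed, multiplying by the appropriate power of a Noether integral (say $p_8^{5-d}$) would produce a nontrivial integral of degree~$5$, so non-existence in degree~$5$ implies non-existence in all lower degrees. First I would set up the linear PDE system $S_5$ encoding $\{H,F\}=0$ for $F$ as in~\eqref{eqn:integral-F}, using the six commuting Noether integrals $I_2=\theta_3,\dots,I_7=\theta_8$ to fix the dependence so that $F$ depends only on $x_1,x_2$.

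Next I would prolong the system to order $k=d+1=6$, forming the matrix $M_5=M_5^{(6)}$ and applying the three simplification steps of Section~\ref{S4}\!\ref{S4.2}: clearing superfluent columns (whose count equals $\Lambda_5^0$), and iteratively solving the monomial and bimonomial equations to pass to the reduced matrix $M_\text{red}$. The decisive quantity is then $\d_5=\#V_\text{red}+\#V_\text{spfl}-\op{rk}(M_\text{red})$, and the goal is to verify the equality $\d_5=\Lambda_5^0$, where $\Lambda_5^0$ is computed from~\eqref{Lambda} with $D=8$ and $d=5$. As in Theorem~\ref{Thm7D}, I anticipate that exact rank computation over the integers may be too costly at this size, so I would fall back on the modular approach of Section~\ref{S4}\!\ref{S4.3}: compute $\op{rk}(M_\text{red})\bmod p$ for a suitably large prime~$p$ along a random increasing sequence, and conclude once $\d_5[p]=\Lambda_5^0$, since by~\eqref{modp} the modular rank is a lower bound for the true rank, forcing $\d_5\le\Lambda_5^0$, while~\eqref{Lambda} gives the reverse inequality.

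The main obstacle is exactly the one flagged in the dimension-7 case: the matrix $M_5$ grows with $D$, so both its dimensions and the magnitude of its integer entries are larger here than for the $(2,3,5,7)$ structure, and the decisive prime~$p$ is expected to grow fast with $D$. Finding a prime for which the modular rank already attains its generic value—so that equality holds in~\eqref{modp} at the relevant submatrix—is the delicate point, since for small primes the rank can drop and give a spurious lower bound on $\d_5$ (hence an inconclusive, merely necessary, condition). I would therefore expect the proof to reduce, just as before, to exhibiting a single prime~$p$ together with the table of the quantities $\#V_5$, $\#V_\text{red}$, $\op{rk}(M_\text{red})\bmod p$, and the resulting $\d_5[p]$, and checking that this last number equals $\Lambda_5^0$; the verification that this $p$ is admissible is purely computational and, since only integers modulo~$p$ are manipulated, is exact and rigorous.
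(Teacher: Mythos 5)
Your proposal follows essentially the same route as the paper: reduce to the top degree $d=5$, form the prolonged linear system $M_5=M_5^{(6)}$ after six prolongations, apply the reductions of Section~\ref{S4}\!\ref{S4.2}, and verify $\d_5=\Lambda_5^0$. The only (harmless) divergence is that you anticipate needing the modular fallback of Section~\ref{S4}\!\ref{S4.3}, whereas the paper's actual computation for this case turns out to be tractable over the integers (the reduced matrix is comparatively small, $4439\times3514$ of full column rank, giving $\d_5=314=\Lambda_5^0$ directly).
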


 \begin{proof}
We use again the procedure from Section~\ref{S4} to show non-existence of
a non-trivial integral of degree~$5$. After six prolongations of the PDE system,
we arrive at the following table:
\smallskip

\begin{center}
\begin{tabular}{|c|c|c|c|c|c|c|c|}
\hline
\# all eqns & \# $V_5$ & \# eqns $M_\text{red}$ & \#$V_\text{red}$ & rk($M_\text{red}$) & $\d_5$ \\
\hline
48048 & 28512 & 4439 & 3514 & 3514 & 314 \\
\hline
\end{tabular}
\end{center}
\smallskip

\noindent The upper bound $\delta_5=314=\Lambda^0_5$ realizes the number of trivial integrals,
and proves that no final (8th) integral of degree $d=5$ exists.
 \end{proof}

\section{Reduction to the system with 2 degrees of freedom}\label{S8}

In this section, we give a uniform description of several of the previously discussed systems in terms of first order ODE systems in 3D. In particular, we reformulate in this way the three systems exhibiting non-integrable behavior, namely the (2,3,5,6) parabolic, (2,3,5,7) elliptic and (2,3,5,8) free truncated
SR-struc\-tures given by the Hamiltonians (\ref{6Dp}), (\ref{7De}) and (\ref{8De}). In addition, the same
reduction can be performed for the general (2,3,5,6) elliptic and hyperbolic SR-structures.

First, note that in all these cases the Hamiltonian is a sum of two squares and so can be expressed as
\begin{equation}\label{eqn:reduced-hamiltonian}
 2H=\rho^2\cos^2\!z+\rho^2\sin^2\!z,
\end{equation}
and $p_i=c_i$ for $i=3,\dots,D$ are the Noether integrals.
Symplectic reduction via these integrals (fixing them and forgetting about $x_i$, $3\le i\le D$, of
which nothing depends) is a classical procedure, see \cite{Wh,A}. Thus, in view of \eqref{eqn:reduced-hamiltonian}, Hamilton's equations can be rewritten in terms of $x,y,z$ and $\rho$
(as well as $c_3,\dots,c_8$).

For instance, in the case of the parabolic (2,3,5,6)-problem, we express the coordinates $p_1,p_2$ in terms of the coordinate $z$ as follows:
\begin{align*}
p_1 &=\rho\cos z+\tfrac12x_2c_3+x_1x_2c_4+\tfrac12x_1^2x_2c_6, \\
p_2 &=\rho\sin z-\tfrac12x_1c_3-x_1x_2c_5.
\end{align*}

Next, we can confine to an energy shell, that is fix $H=\frac12\rho^2=\op{const}$, which reduces the dynamics to the manifold $S_1T^*\R^2=\R^2(x,y)\times S^1(z)$, where we let $x=x_1$, $y=x_2$.
Without loss of generality we can assume $\rho=1$.
After an appropriate change of coordinates, the Hamiltonian equation $\dot\eta=\{\eta,H\}$ on the energy shell writes as the $3\times3$ system:
 \begin{equation}\label{3s3}
\dot x=\cos z,\ \dot y=\sin z,\ \dot z=Q(x,y),
 \end{equation}
where $Q=Q(x,y)$ is a quadratic polynomial. This polynomial can be brought to the following normal form
($a\neq0$ \& $b\neq0$)
 \begin{align}
\label{Q6D}
Q&=Q_1(x,y)=a\,x^2+b\,y\ \text{ for }D=6\text{ parabolic},\\
\label{Q78D}
Q&=Q_2(x,y)=a\,x^2+b\,y^2+c\ \text{ for }D=7,8
 \end{align}
(the latter formula contains also the 6D elliptic and hyperbolic cases).
Take, for instance, the $6D$ parabolic case, formula \eqref{Q6D}. In this example,
we have $a=c_6/2$ and $b=c_5$, and we assume $a,b\not=0$.

Notice that the condition $a,b\neq0$ is important, as otherwise the system fibers over a 2D flow,
which can never be chaotic.

A similar effect happens for $a=b$ and $Q=Q_2(x,y)$, where a change of variables $x=r\cos\psi$, $y=r\sin\psi$ reduces the system to a 2D flow with coordinates $r$ and $s=z-\psi$. This latter case corresponds to the 6D elliptic maximally symmetric SR-structure. The corresponding 3D
system possesses the following integral
\[
  F=\tfrac{1}{4}a\,r^4+\tfrac{c}{2}\,r^2-r\sin(s),
\]
which corresponds exactly to the integral $I_2$ identified in Section~\ref{S5}, cf.\ also \cite{V2}.
However, for the general $a,b$, it will be shown in the next section that the system exhibits a chaotic behavior.

 \begin{rk}
One can check that in the complement to a hypersurface the following 1-form on
$\R^2(x,y)\times S^1(z)$ is contact:
 $$
\a=\tfrac13(a\,x^3dy-b\,y^3dx)+\tfrac{c}2(x\,dy-y\,dx)+\cos z\,dx+\sin z\,dy.
 $$
In this domain its Reeb vector field $R_\a$, given by the two conditions $\a(R_\a)=1$ and
$d\a(R_\a,\cdot)=0$, preserves the volume form $\a\we d\a$ and so is divergence-free with respect
to it (the Reeb field $R_\a$ plays a distinguished role in contact geometry). Our vector field,
given by (\ref{3s3}) for $Q=Q_2(x,y)$, is proportional to $R_\a$, and so has the same trajectories.
For $Q=Q_1(x,y)$ the situation is similar, if $\a$ is properly modified.
 \end{rk}

We conclude this section with a note on the resemblance of system (\ref{3s3}) to a driven pendulum
in the case $Q=Q_1(x,y)$. Let us eliminate $x,y$ from (\ref{3s3}). Differentiating $\dot{z}$ and
replacing $\dot{x}$ and $\dot{y}$ via ODE (\ref{3s3}), we get the following 3rd order ODE on $z=z(t)$, where
$\Delta=\frac{d}{dt}\circ\frac1{\cos z}$:
 $$
\Delta\,(z''-b\sin z)=2a\cos z,
 $$
which can be written in non-local form as:
\begin{equation} \label{eq:PendEq}
 z''-b\sin z=\Delta^{-1}(2a\cos z)=2a\cos z\,D_t^{-1}\cos z.
\end{equation}
In this form it resembles the driven pendulum $z''-b\sin z=a\cos kt$ without dissipation.
For $a=0$ system~\eqref{eq:PendEq} is the simple pendulum when $b<0$, while for $b>0$
the second term on the left hand side describes a repulsive
power\footnote{For instance, when $z\ll1$ the solutions are hyperbolic.}.
However, contrary to the driven pendulum, where the right hand side is an external force,  system~\eqref{eq:PendEq} seems to be self-driven.
The evolution of this system is shown in Fig.~\ref{Fig:drivenpendulum}
for three different parameter combinations.
The orbital dynamics in Fig.~\ref{Fig:drivenpendulum} is quite complex and resembles the dynamics of the damped driven pendulum (see, e.g., Fig. 9, 10 in \cite{H} and references therein),
indicating non-integrability.
This resemblance appeals for a more systematic numerical analysis of system~\eqref{3s3},
which is provided in the next section.

\begin{figure}[htp]
\centering
\includegraphics[width=0.32\textwidth]{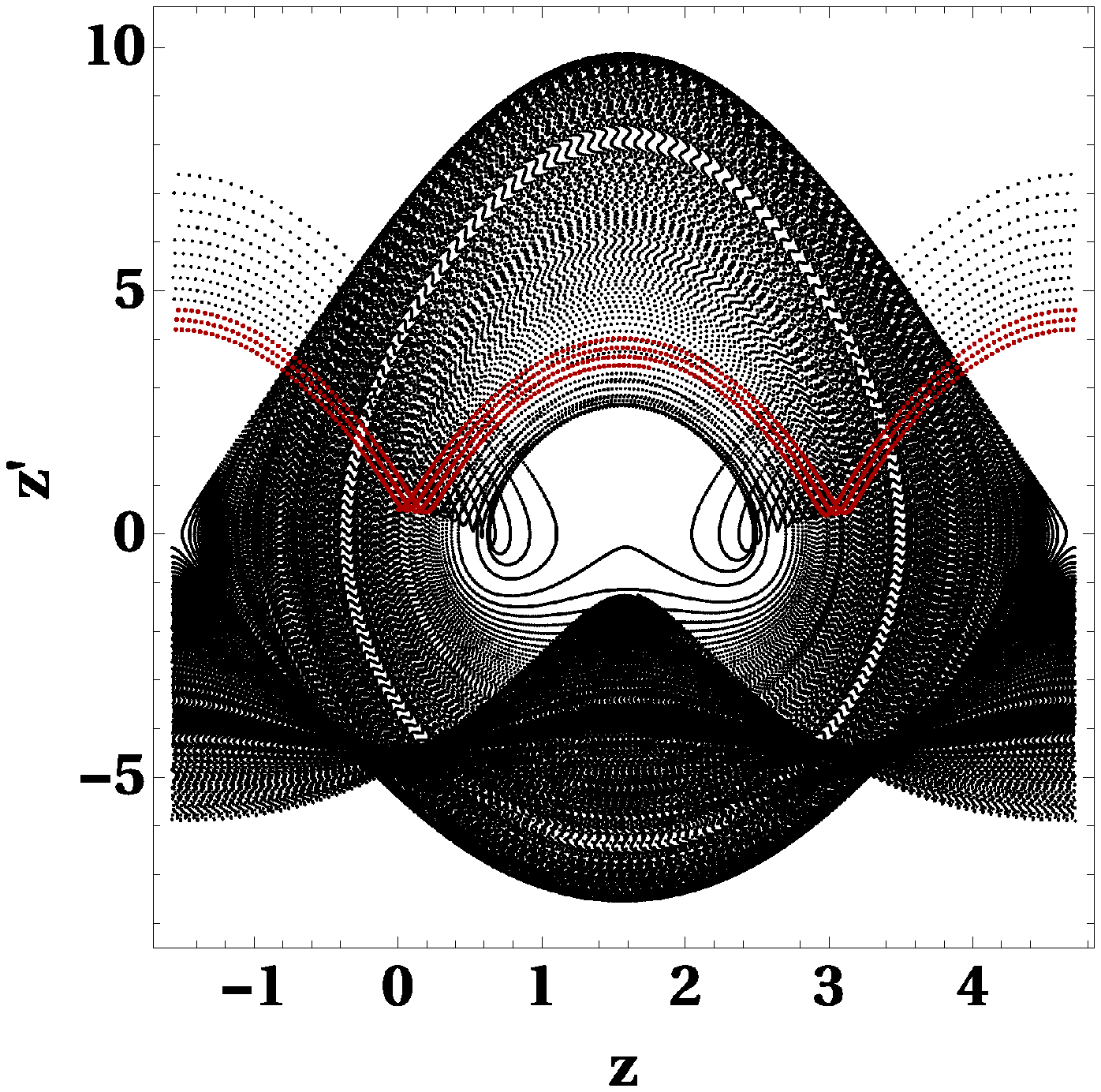}
\includegraphics[width=0.32\textwidth]{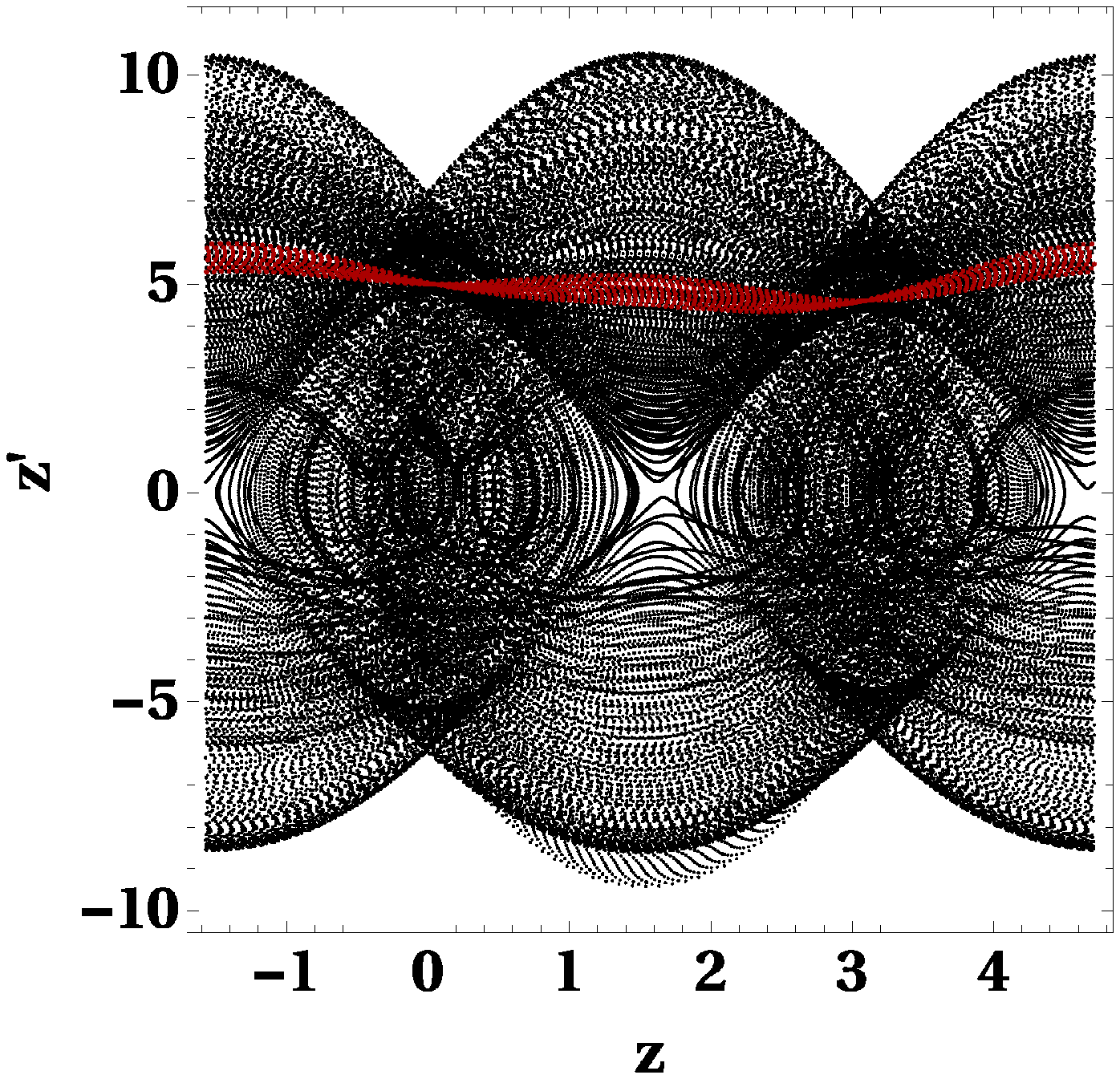}
\includegraphics[width=0.32\textwidth]{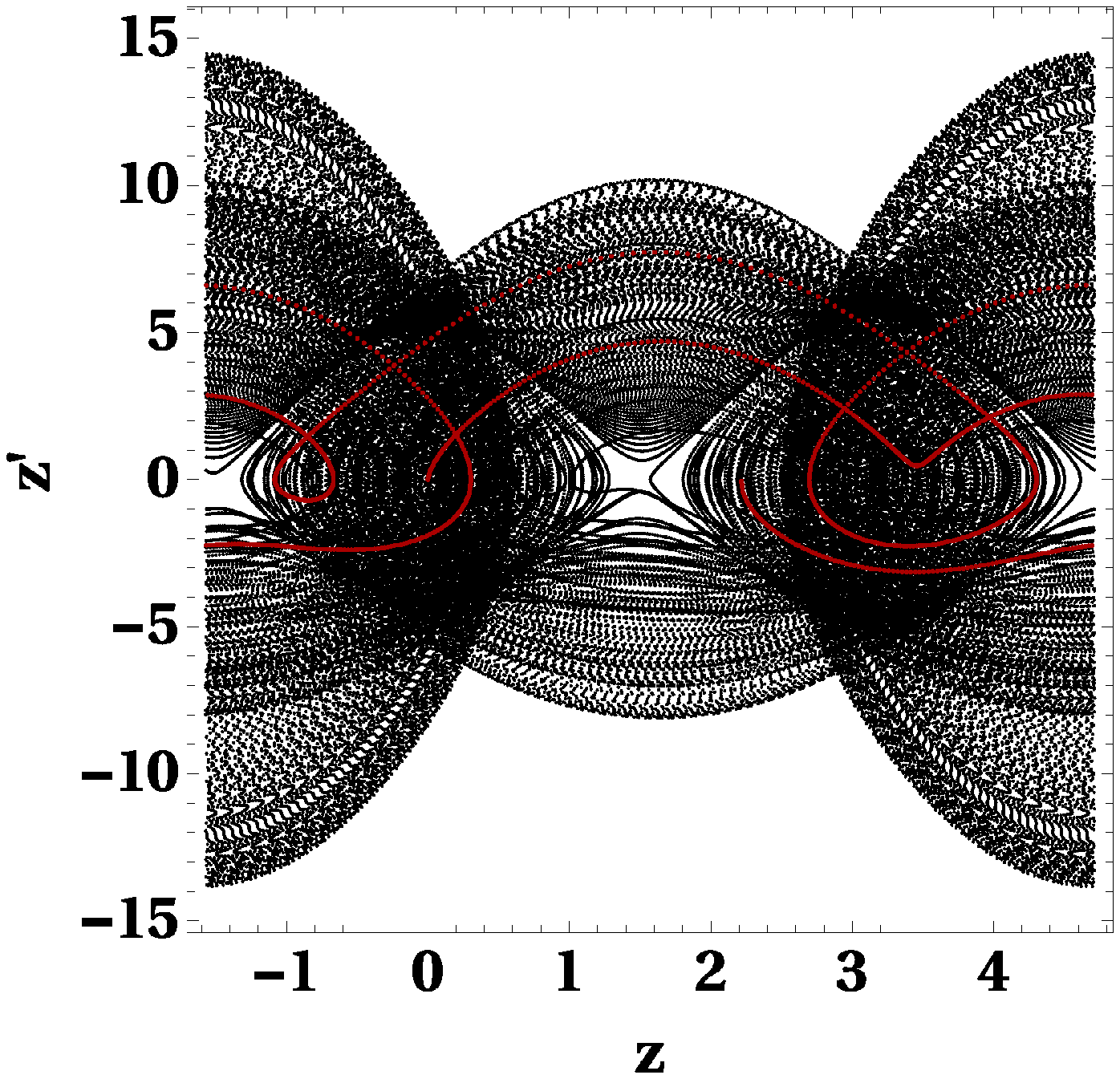}
\caption{The orbital evolution of variables $z,z^\prime$ for the $D=6$ parabolic case with the
parameters $a=10$ and $b=-0.1$ (left panel), $b=-1$ (middle panel),
$b=1$ (right panel). The initial conditions for $(x,y,z)$ are $(0,-5,0)$ in the left and middle panels
and $(0,0,0)$ in the right panel. The red curves show the evolution in the time interval $0 \leq t \leq 1$,
while the black dots continue it to time 1000.}
\label{Fig:drivenpendulum}
\end{figure}

\section{Numerical evidence of non-integrability}\label{S9}

In this section, we provide numerical evidence showing the non-integrability of
systems \eqref{Q6D}-\eqref{Q78D} (corresponding to SR-geodesic flows with $D=6,7,8$)
by evolving the equation of motion of the reduced system
\eqref{3s3}. In Section~\ref{S8} we have already claimed that system \eqref{Q6D} resembles
the dynamics of a driven pendulum that is chaotic.  However,
this resemblance can be a mere coincidence. In order to put forward a thorough investigation of
whether in the above systems chaos appears or not, we need a more standardized method.

One of the most classical methods for finding chaos is given by investigating
the dynamics of the return map on the surfaces of section (Poincar\'e map).
We compute this numerically by evolving the equations of motion with the
Cash-Karp-Runge-Kutta scheme. The accuracy of the numerical results is checked
by reducing the integration step size by an order of magnitude and testing whether
this reduction changes the trajectory of the orbit.

\begin{figure}[htp]
\centering
\includegraphics[width=0.35\textwidth]{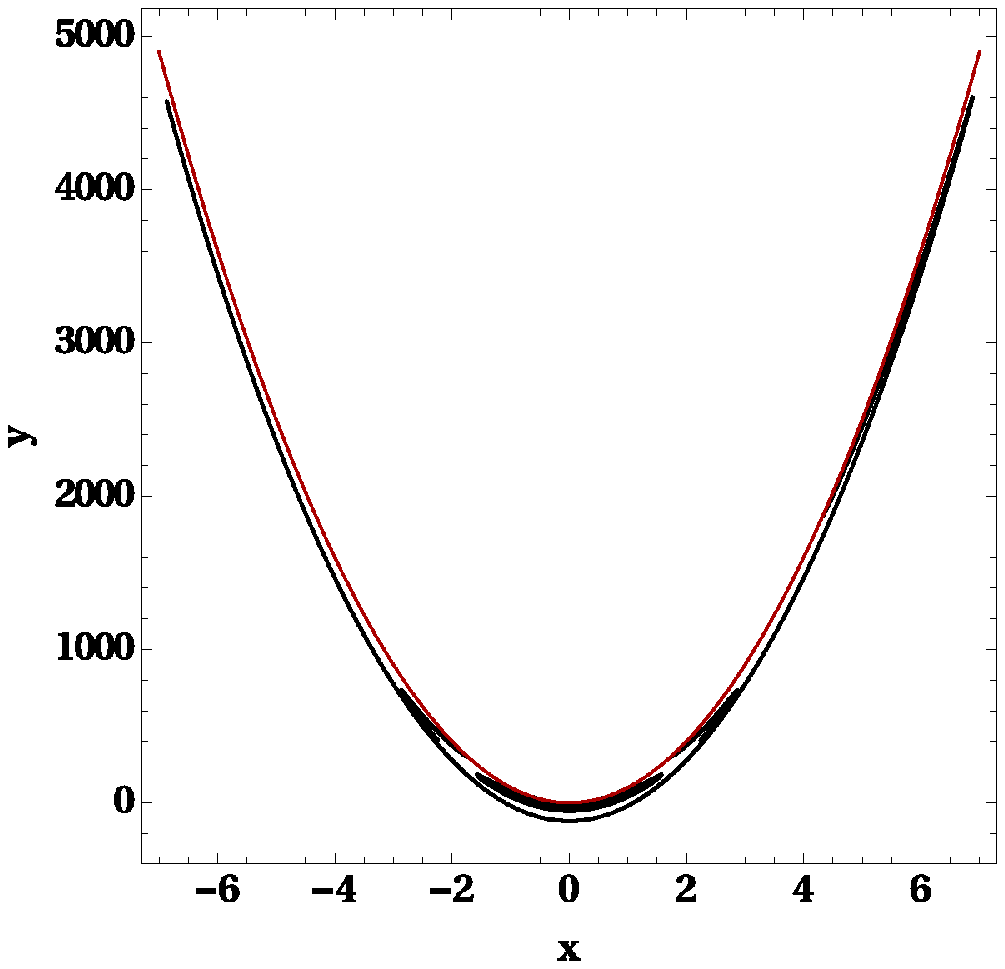}
\includegraphics[width=0.35\textwidth]{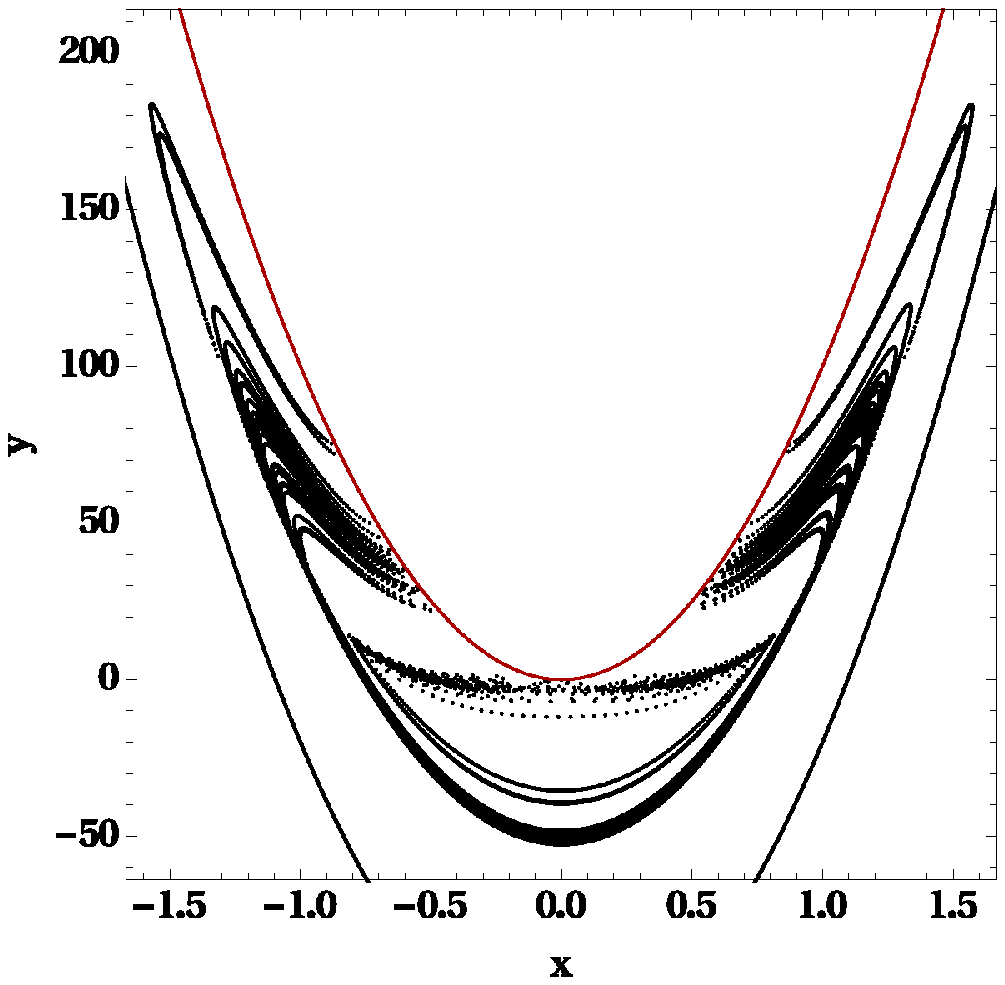}
\caption{The surface of section on $z=0$, $z^\prime>0$ (black dots) for
system \eqref{Q6D} with parameters $a=10,b=-0.1$ and the orbit starting from $x=y=0$.
The red curve $Q_1(x,y)=0$ shows the limit of this section. The left panel
shows $10^5$ sections, while the right panel zooms the left plot around
the starting point.}
\label{Fig:6Dz0}
\end{figure}

Surfaces of section were employed in \cite{Sa} for the $D=8$ case as well\footnote{Equations
(80)-(82) of \cite{Sa} with $q=0$ correspond to our  \eqref{3s3}
with $Q=Q_2(x,y)$.}. There the surface $z=0$,
$z^\prime>0$ has been chosen as the Poincar\'{e} section. However, if we employ the same
surface of section for the $D=6$ parabolic case we encounter a problem. Namely, the surface
of section $z=0$, $z^\prime>0$ does not meet all the trajectories in the phase space,
because of the parabolic form of $Q_1(x,y)$. For example, in Fig.~\ref{Fig:6Dz0}
the red curve $Q_1(x,y)=0$ sets a limit for the section we can plot, and creates
an obstacle to study the whole phase space. In other words, the surface $z=0$,
$z^\prime>0$ for system \eqref{Q6D} is not a good choice for the Poincar\'{e} section.
Moreover, the oscillations across the $x$-axis indicate that the system is non-compact.
Namely, as an orbit evolves it tends to reach larger and larger values of $|y|$
and $|x|$.

\begin{figure}[htp]
\centering
\includegraphics[width=0.45\textwidth]{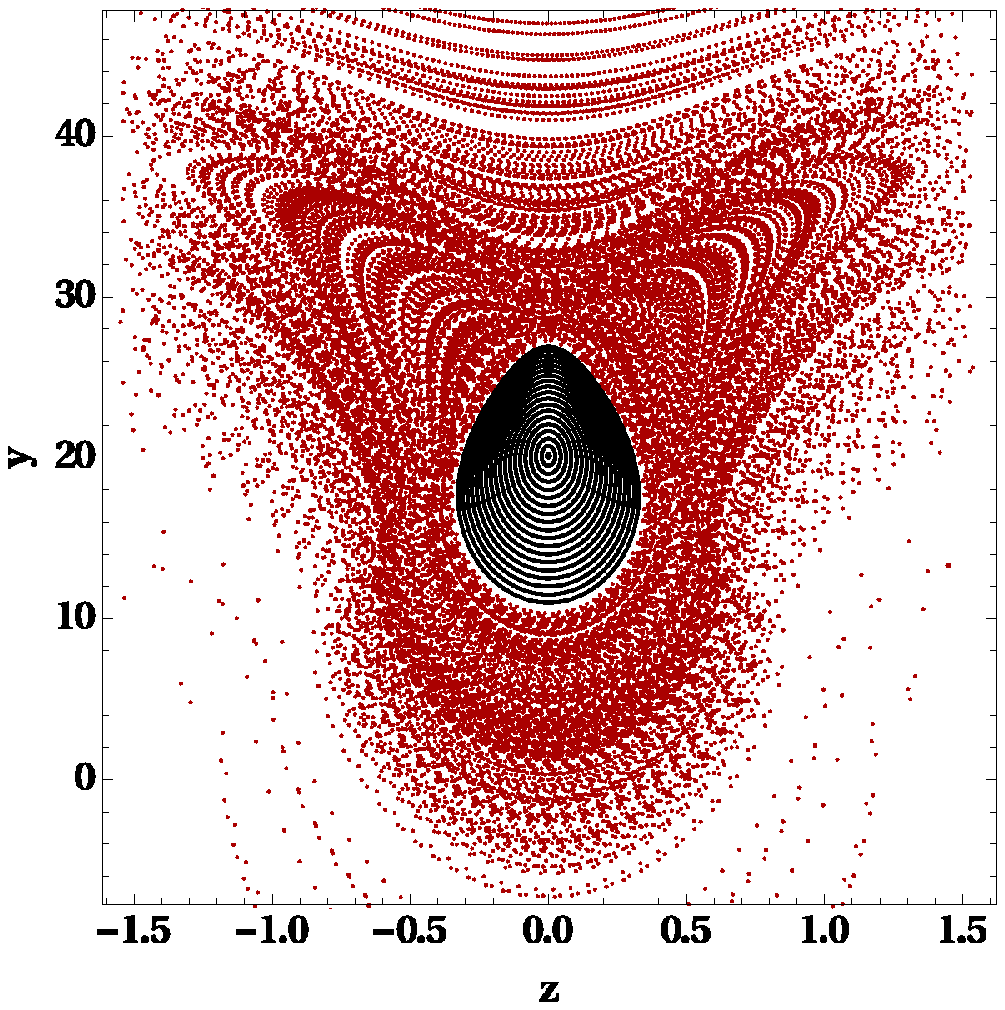}
\includegraphics[width=0.45\textwidth]{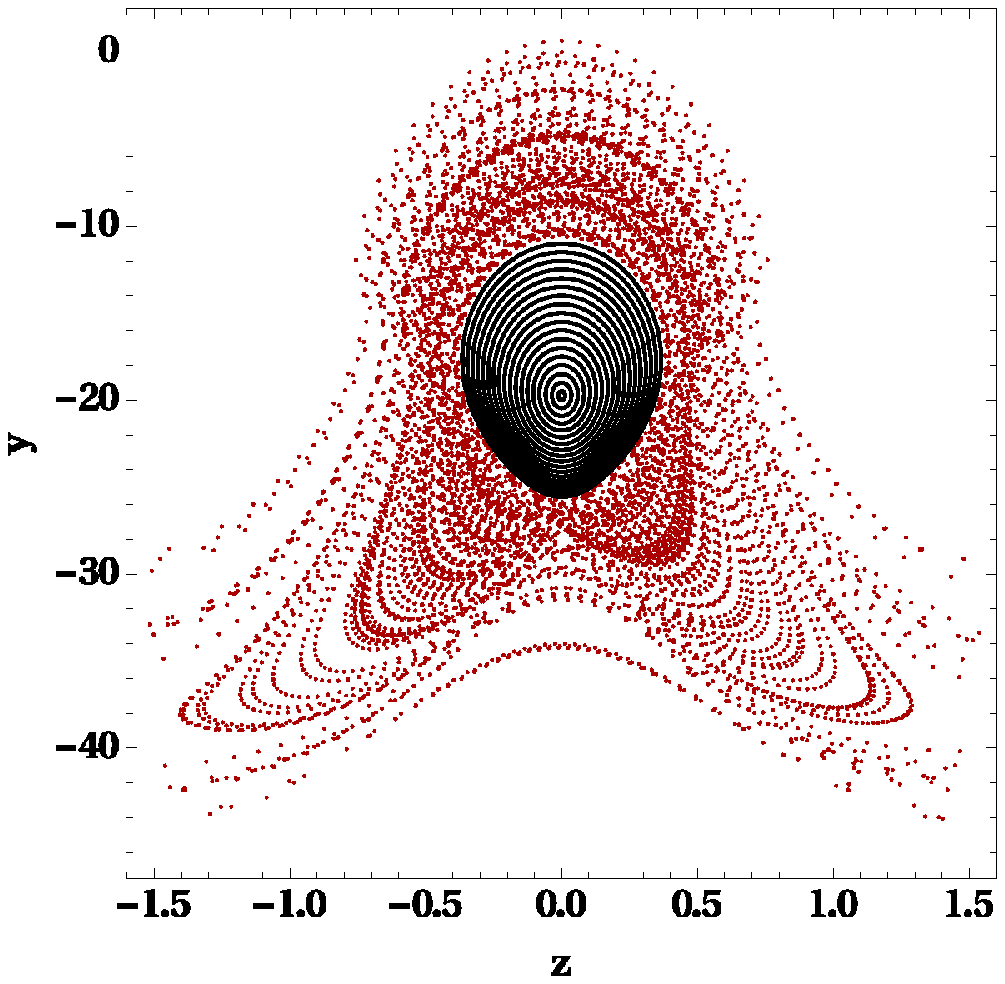}
\caption{Details of the surface of section on $x=0$, $x^\prime>0$ for system \eqref{Q6D}
with parameters $a=10$ and $b=-0.1$ (left panel), $b=0.1$ (right panel).
The black closed curves represent regular orbits, while the red dots correspond
to one irregular orbit.}
\label{Fig:6Da0}
\end{figure}

Because of the above mentioned oscillations across the $x$-axis shown in
Fig.~\ref{Fig:6Dz0}, we assumed that a good surface of section for system \eqref{Q6D}
would be the surface $x=0$, $x^\prime>0$. This assumption has proven
to be correct and we show the results on Fig.~\ref{Fig:6Da0}. In both panels of
Fig.~\ref{Fig:6Da0} we can see a region of concentric closed curves (black curves),
which represent regular orbits. The center of these regular orbits lies around the point
$(z,y)=(0,20)$ in the left panel, and around the point $(z,y)=(0,-20)$ in the right panel.
The concentric curves indicate that the central point corresponds to a stable periodic orbit.

In both cases around these concentric orbits lie an irregular orbit (red dots), which
tends to cover all the available phase space in the complement to the regular orbits.
The irregular orbit apparently stems from a point around $(z,y)=(0,30)$ in the left panel, and around
$(z,y)=(0,-30)$ in the right panel. Both these points match the appearance of unstable periodic orbits.

These irregular orbits in Fig.~\ref{Fig:6Da0} indicate that the $D=6$ parabolic system is
non-integrable. Note that the plots of Fig.~\ref{Fig:6Da0} do not show
the whole phase space, because the system is non-compact. Instead we focus our
plots on the region around the regular orbits and near the unstable point, where
the irregular features are more prominent.

\begin{figure}[htp]
\centering
\includegraphics[width=0.7\textwidth]{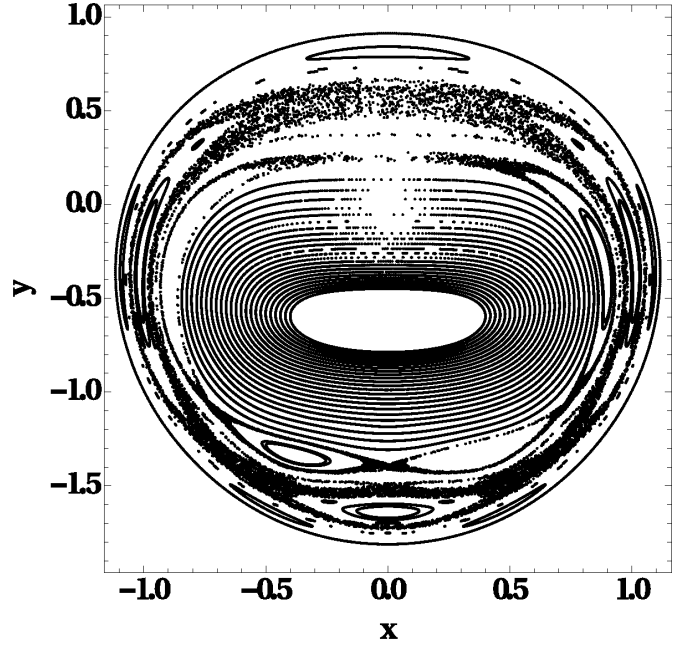}
\caption{Dynamics on the surface of section $z=0$, $z^\prime>0$ for system \eqref{Q78D} with
parameters $a=2$, $b=1$, $c=0$ (elliptic case).}
\label{Fig:7Dz0el}
\end{figure}

System \eqref{Q78D} can be separated in two categories: the elliptic ones
($ab>0$), and the hyperbolic ones ($ab<0$). In the elliptic case the surface of
section in Fig.~\ref{Fig:7Dz0el} tells straightforwardly that the system is
non-integrable. Namely, in Fig.~\ref{Fig:7Dz0el} we can discern the characteristic features of a
non-integrable system like chaotic regions and islands of stability belonging to
Birkhoff chains. In the particular case of the system corresponding to $D=8$, the indicated
non-integrability is in agreement with the non-integrability conjecture of \cite{Sa}.

\begin{figure}[htp]
\centering
\includegraphics[width=0.30\textwidth]{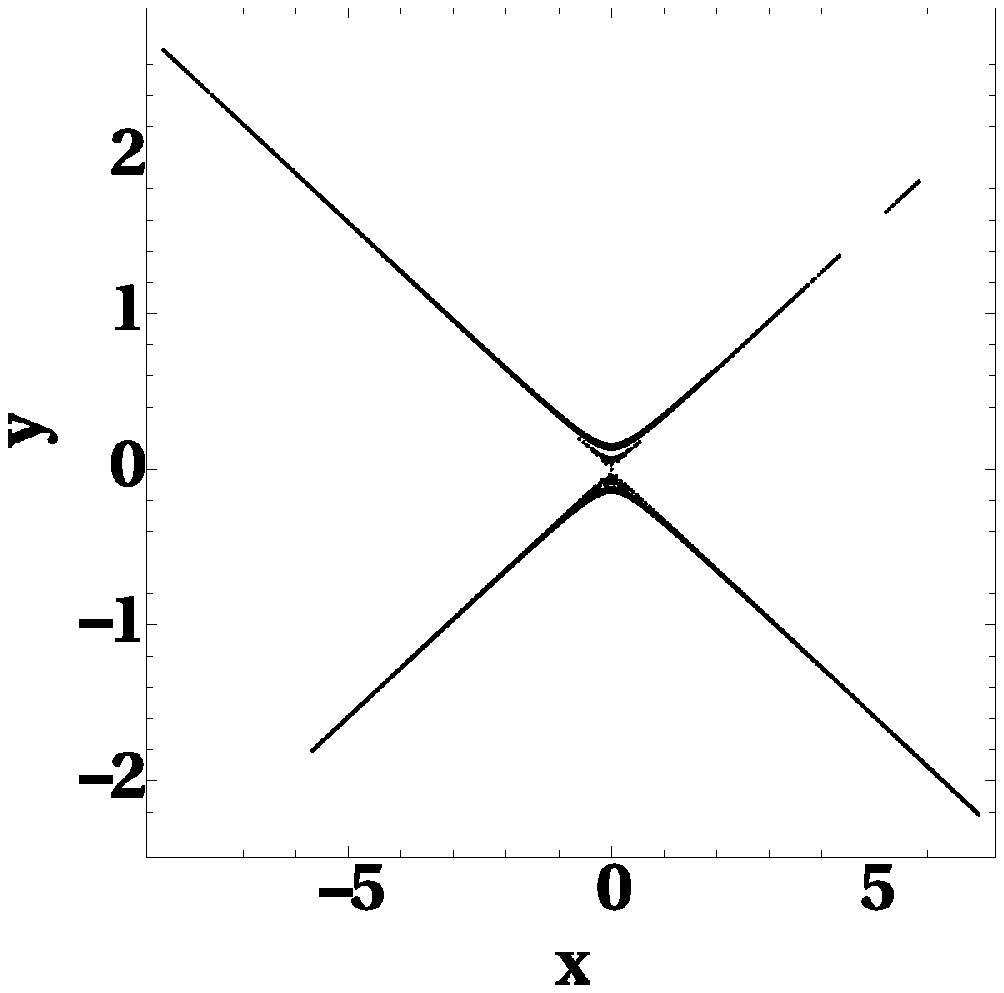}
\includegraphics[width=0.30\textwidth]{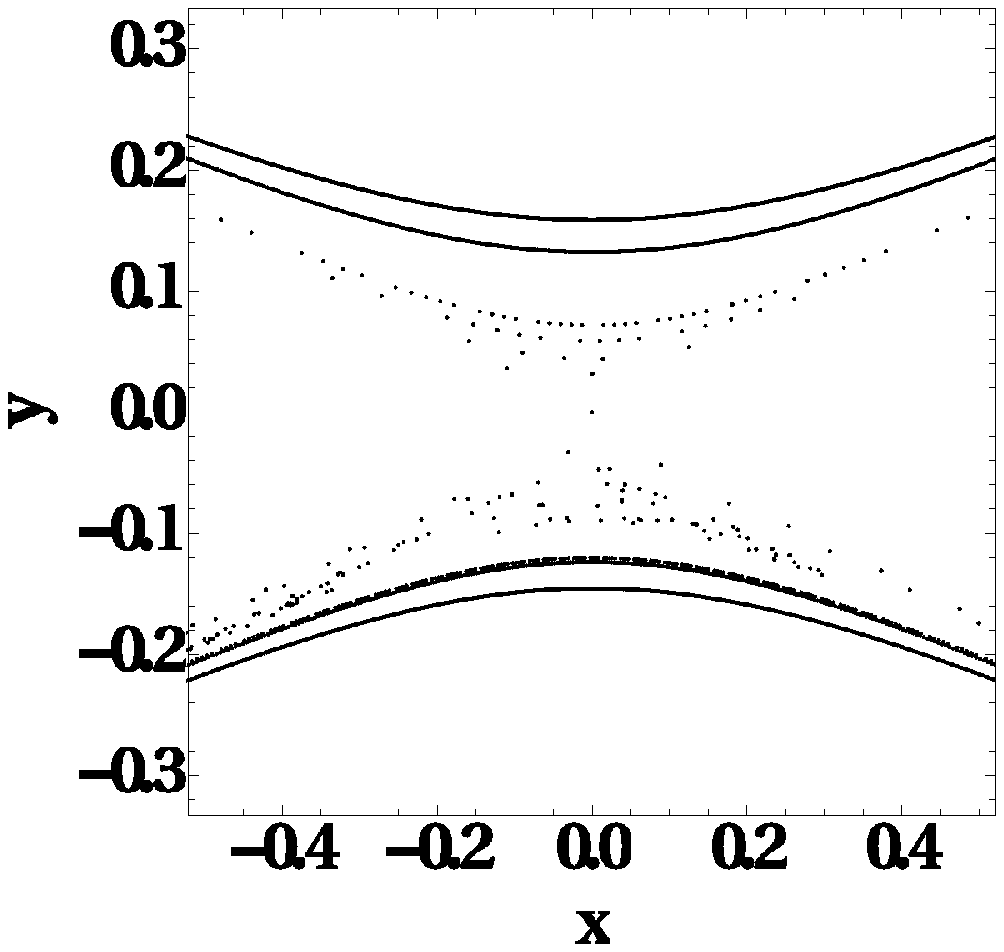}
\includegraphics[width=0.30\textwidth]{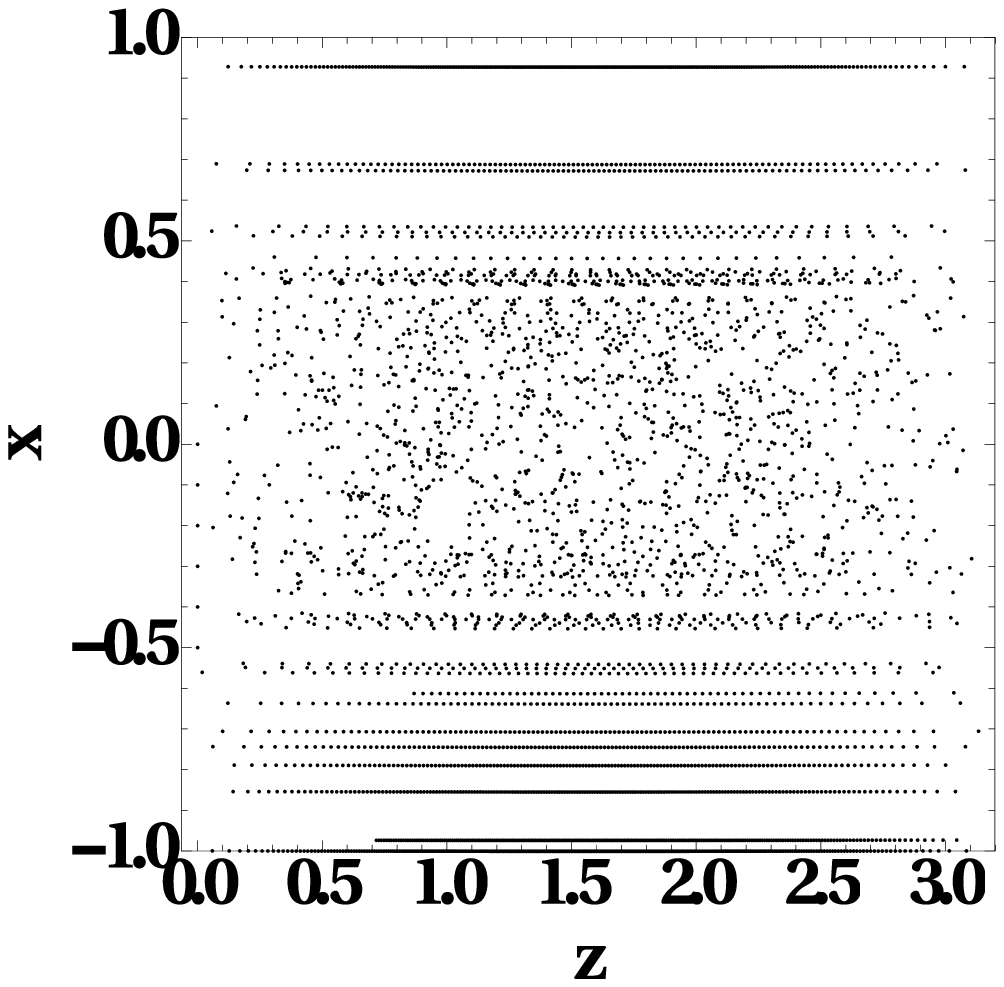}
\caption{The surface of section on $z=0$, $z^\prime>0$ (left and middle panel)
for system \eqref{Q78D} with parameters $a=-10^3,~b=10^4$. The orbit starts from
$x=y=0$, and evolves for $10^4$ sections. The whole orbit is shown in the left
panel, while a detail focusing on the starting point is shown in the middle
panel. The right panel shows the same orbit on the surface $y=0$, $y^\prime>0$.}
\label{Fig:7Dhyp}
\end{figure}

We can assert non-integrability also for the hyperbolic case on the ground of analytic dependence
on the parameters $a,b$ of our system (assuming the integrals should share the same property).
However, we can confirm this numerically as well, and we do it in Fig.~\ref{Fig:7Dhyp}:
the hyperbolic orbit is shown on two different surfaces
of section, and both of these surfaces indicate that the orbit is irregular,
and therefore, the systems \eqref{Q78D}, corresponding to $D=7,~8$, are non-integrable.

\bigskip

\appendix

\section{SR-structures on 3D Lie groups}\label{A:1}

Every left-invariant SR structure on a 3-dimensional Lie group $G$
is determined by a 2-dimensional subspace (not subalgebra) of the Lie algebra $\fg$ and a metric on it.
The classification of such is due to \cite{VG}, and this reference also contains the integration of the
equations of geodesics in terms of a semi-direct product.

Liouville integrability of left-invariant SR structures on 3D Lie groups $G$
was proven in the preprint arXiv:math/0105128 of \cite{Kr}. 
It was later re-visited in \cite{MS}. We provide a short proof here for completeness.

 \begin{theorem}\label{Thm3D:appendix}
Non-holonomic geodesic flows of left-invariant SR-metrics on 3-dim
Lie groups are Liouville integrable with polynomial integrals.
 \end{theorem}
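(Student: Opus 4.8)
The plan is to classify the relevant left-invariant structures by the 3-dimensional Lie algebra $\fg$ underlying $G$, and then exhibit enough integrals in each case. Since we need only two commuting integrals $I_1=H$ and one more $I_2$ (functionally independent, in involution) to conclude Liouville integrability in a $3$-dimensional configuration space, the essential observation is that we already have a plentiful supply of candidates: for every left-invariant Hamiltonian $H=\tfrac12(\oo_1^2+\oo_2^2)$, any right-invariant function on $T^*G$ commutes with $H$, because left-invariant and right-invariant functions Poisson-commute. This follows from the commutation relations recorded in Section~\ref{S2}, namely $\{\oo_i,\theta_j\}=0$. Thus the right-invariant linear functions $\theta_1,\theta_2,\theta_3$ are automatically integrals of $H$, and so are the Casimir functions of the Lie--Poisson structure on $\fg^*$.

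First I would recall that the classification of \cite{VG} reduces the problem to finitely many model algebras (the Bianchi classification of $3$-dimensional real Lie algebras). For each such $\fg$ one identifies a Casimir $C$ of the Lie--Poisson bracket on $\fg^*$, i.e.\ a function $C=C(\oo_1,\oo_2,\oo_3)$ with $\{C,\oo_i\}=0$ for all $i$; equivalently $C$ is $\op{ad}^*$-invariant. Since every $3$-dimensional Lie algebra admits a nonconstant polynomial Casimir (the coadjoint orbits in $\fg^*$ are at most $2$-dimensional, so the invariant is a single function of the three coordinates), we obtain the needed second integral by taking $I_2=C$, provided $C$ is functionally independent of $H$. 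The pair $I_1=H$, $I_2=C$ is automatically in involution: $C$ is central for $\{\cdot,\cdot\}$ restricted to left-invariant functions, hence $\{H,C\}=0$. This gives two commuting integrals, which is exactly what Liouville integrability requires here ($n=3$, so we need $I_1,I_2,I_3$ but the Casimir together with $H$ already foliate, and a third commuting integral is furnished either by a second Casimir-type quantity or by one more right-invariant linear function in the cases where it commutes with $C$).

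The step I expect to be the main obstacle is \emph{functional independence}, which must be checked case by case. In the degenerate situations — for instance when $\fg$ is nilpotent (the Heisenberg case already treated in Section~\ref{S2}\!\ref{S2.1}) or when the chosen $2$-plane in $\fg$ interacts with the Casimir so that $C$ reduces to a function of $H$ — one has to supply a different third integral. The natural remedy, as the authors already flag in Section~\ref{S2}, is to produce an extra right-invariant linear form $\theta_j$ that is functionally independent of $H$ and $C$; this is where the explicit structure constants of each Bianchi type enter. I would organize the proof as a short table: for each Lie algebra in the classification of \cite{VG}, list $H$, the Casimir $C$, and the supplementary right-invariant integral, and verify almost-everywhere independence by inspecting the differentials. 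The genuinely non-trivial point is to confirm that the semidirect-product integration of \cite{VG} is compatible with these algebraic integrals, but since any right-invariant function is conserved, the argument is uniform and requires only the finitely many independence checks.
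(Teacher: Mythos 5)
Your overall strategy --- left-invariant $H$ commutes with all right-invariant functions and with Casimirs of the Lie--Poisson bracket, then check functional independence case by case --- matches the paper's first step. But there is a genuine gap at the pivot of your argument: the claim that \emph{every} $3$-dimensional Lie algebra admits a nonconstant \emph{polynomial} Casimir is false, and this is precisely the delicate point of a theorem that asserts integrability \emph{with polynomial integrals}. The fact that coadjoint orbits are at most $2$-dimensional only guarantees a local smooth invariant; it says nothing about polynomiality. For the solvable algebras $\fg=\R^1\ltimes\R^2$ (Bianchi types V, VI$_h$, VII$_h$) the Casimir is typically a ratio of powers of the $\oo_i$ with exponents depending on the structure parameters, or involves logarithms/arctangents --- e.g.\ for $[e_1,e_2]=e_2$, $[e_1,e_3]=e_3$ the invariant is $\oo_2/\oo_3$, and for generic parameters it is not even algebraic. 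The paper explicitly flags this (``it can be non-algebraic (depending on parameters) in the remaining semi-direct cases''), so taking $I_2=C$ simply does not produce a polynomial integral in these cases, and your proposed remedy of appending one more right-invariant linear form $\theta_j$ to the pair $\{H,C\}$ does not repair the problem, since $C$ itself is the non-polynomial member of the triple.

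The paper's fix, which your proposal misses, is to abandon the Casimir in the semi-direct cases and instead use the abelian ideal $\mathfrak{h}=\R^2\subset\fg$: the two right-invariant linear forms $\theta_2,\theta_3$ dual to a basis of $\mathfrak{h}$ Poisson-commute with each other (because $\{\theta_i,\theta_j\}=-c_{ij}^k\theta_k$ and $\mathfrak{h}$ is abelian) and with $H$, giving three involutive \emph{polynomial} integrals. Functional independence is then settled by a uniform argument rather than a table: if $H$ were functionally dependent on right-invariant linear forms it would be bi-invariant, which it is not. Also note a smaller slip: on $T^*G$ with $\dim G=3$ you need $n=3$ independent commuting integrals, not two; your opening sentence says the opposite before the later parenthetical partially walks it back. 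To salvage your write-up, split the classification into (i) Heisenberg and the simple algebras, where the Casimir is linear resp.\ quadratic and your argument works verbatim, and (ii) the semi-direct cases, where you must replace the Casimir by the pair of commuting right-invariant linear forms coming from the abelian ideal.
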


 \begin{proof}
The left-invariant Hamiltonian $2H=\oo_1^2+\oo_2^2$ commutes with all right-invariant
forms $\theta_i$. Every 3-dimensional Lie algebra $\fg$ has a Casimir function $C\in C^\infty(\fg^*)$
(because $G$ has odd dimension), so the involutive set of integrals is: $I_2=C$, $I_3=\theta_i$,
where the number $i$ is chosen such that $I_1=H,I_2,I_3$ are functionally independent.

Moreover this $C$ is linear for the Heisenberg algebra and quadratic for simple Lie algebras
$\mathfrak{sl}(2)$, $\mathfrak{so}(3)$, but it can be non-algebraic (depending on parameters)
in the remaining semi-direct cases $\fg=\R^1\ltimes\R^2$. In these cases,
$\mathfrak{h}=\R^2$ is an Abelian subalgebra.
The right-invariant forms $I_2,I_3$ associated to a basis in $\mathfrak{h}$ are integrals in involution.
The Hamiltonian $H$ is algebraically (and functionally) independent of those,
because otherwise it would be bi-invariant. This completes the proof.
 \end{proof}

\section{Prolongation of Killing symmetries}\label{A:2}

Let $\fg=\fg_{-\nu}\oplus\dots\oplus\fg_{-1}$ be a (finite-dimensional) graded nilpotent Lie
algebra\footnote{It is customary in Tanaka theory to use negative gradation in the basic part, so we switch
here from the notations used in the main body of the paper.}, such that
$\fg_{-1}$ generates $\fg$. The Tanaka prolongation is a graded Lie algebra $\hat\fg$ such that
$\hat\fg_{-}=\oplus_{i<0}\hat\fg_i=\fg$ and it is the maximal graded Lie algebra with this property
(its construction is outlined below). In particular, $\hat\fg_0=\mathfrak{der}_0(\fg)$ is the algebra of
grading preserving derivations of the Lie algebra $\fg$.

Given a subalgebra $\fg_0\subset\hat\fg_0$, the Tanaka prolongation $\op{pr}(\fg,\fg_0)=
\fg_{-\nu}\oplus\dots\oplus\fg_{-1}\oplus\fg_0\oplus\hat\fg_1\oplus\dots$
is naturally defined if we restrict the non-positive part to $\fg\oplus\fg_0$.
Constructively $\hat\fg_1$ consists of the homomorphisms $\vp:\fg_{-i}\to\fg_{1-i}$, $i>0$,
satisfying the Leibniz rule $\vp([x,y])=[\vp(x),y]+[x,\vp(y)]$, then we similarly define $\hat\fg_2$ etc.
If some $\hat\fg_r=0$, then also $\hat\fg_i=0$ for $i>r$ and the algebra $\hat\fg$ is finite-dimensional.

An example of reduction of $\fg_0$ is given by a left-invariant SR-structure.

 \begin{theorem}
Let $g$ be a Riemannian metric on $\fg_{-1}$ and
$\fg_0=\mathfrak{der}_0(\fg)\cap\mathfrak{so}(\fg_{-1},g)$.
Then $\op{pr}_+(\fg,\fg_0)=0$, i.e. $\hat\fg_i=0$ $\forall i>0$.
 \end{theorem}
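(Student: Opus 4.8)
The plan is to reduce the statement to the vanishing of the first prolongation $\hat\fg_1$ and then to exploit the orthogonality of $\fg_0$ by a ``musical'' computation. The reduction is immediate from the construction recalled above: since $\hat\fg_r=0$ forces $\hat\fg_i=0$ for all $i>r$, it suffices to prove $\hat\fg_1=0$. An element $\varphi\in\hat\fg_1$ is a degree $+1$ derivation of $\fg$, and because $\fg_{-1}$ generates $\fg$ it is completely determined by its restriction $\psi:=\varphi|_{\fg_{-1}}\colon\fg_{-1}\to\fg_0\subseteq\mathfrak{so}(\fg_{-1},g)$; the remaining values are propagated by the Leibniz rule, subject to the requirement that $\varphi$ descend consistently through the bracket relations of $\fg$. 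So the whole problem is to show $\psi=0$.

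First I would set up the trilinear form $T(X,Y,Z)=\langle\psi(X)Y,Z\rangle$ on $\fg_{-1}$, where $\langle\cdot,\cdot\rangle=g$. Since each $\psi(X)$ lies in $\mathfrak{so}(\fg_{-1},g)$, $T$ is antisymmetric in its last two slots. The Leibniz rule gives the second relation $\psi(X)Y-\psi(Y)X=\varphi([X,Y])$, which replaces the symmetry $\psi(X)Y=\psi(Y)X$ of the classical abelian case (where $\fg_{-1}$ is commutative and one concludes $\mathfrak{so}^{(1)}=0$ by the standard hexagon permutation argument). Here $\fg_{-1}$ is not abelian, and the correction term $\varphi([X,Y])\in\fg_{-1}$ enters; running the hexagon reproduces exactly this correction and the identity collapses to a tautology. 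Hence the metric on $\fg_{-1}$ alone is not enough, and the argument must be coupled to the higher filtrands.

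To do this I would extend $g$ to a $\fg_0$-invariant, grading-orthogonal inner product $\langle\cdot,\cdot\rangle$ on all of $\fg$. This is possible because $\fg_0\subseteq\mathfrak{so}(\fg_{-1},g)$ is a compact Lie algebra, so each of its representations on $\fg_{-k}$ preserves a positive-definite form; consequently every $\psi(X)$ acts skew-symmetrically on each $\fg_{-k}$. The Leibniz rule now reads, for $w\in\fg_{-k}$, $\varphi([X,w])=\psi(X)(w)+[X,\varphi(w)]$, relating the unknown $\varphi|_{\fg_{-k}}$ to $\varphi|_{\fg_{-k+1}}$. I would then combine, level by level, the skew-symmetry of $\psi(X)$ with these relations and the Jacobi identity to show that $\varphi$ cannot be a nonzero raising derivation compatible with $\langle\cdot,\cdot\rangle$; equivalently, that the first Tanaka prolongation of the orthogonal reduction $(\fg,\fg_0)$ vanishes. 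Conceptually this is the infinitesimal rigidity that a Killing field of a left-invariant SR-structure is determined by its $1$-jet, so that a symmetry whose symbol sits in positive degree (vanishing value and vanishing linearization at the unit) must be identically zero.

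The main obstacle is precisely this last step: the bracket-correction terms $[X,\varphi(w)]$ couple consecutive filtrands, so the vanishing does not follow from a single closed musical identity as in the Riemannian case, but requires propagating the skew-symmetry constraint through the whole negatively graded algebra using the generation of $\fg$ by $\fg_{-1}$. I expect the cleanest organization to be an induction on the filtration degree (or, dually, a weight/cohomological vanishing argument for $H^1(\fg_-,\fg)$ in positive degree), with the orthogonality of $\fg_0$ supplying the decisive sign at each stage.
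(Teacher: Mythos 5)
Your setup is sound as far as it goes: reducing to $\hat\fg_1=0$ is legitimate, the observation that $\varphi\in\hat\fg_1$ is determined by $\psi=\varphi|_{\fg_{-1}}:\fg_{-1}\to\fg_0\subseteq\mathfrak{so}(\fg_{-1},g)$ is correct, and your diagnosis that the classical hexagon argument for $\mathfrak{so}^{(1)}=0$ collapses because the Leibniz rule yields $\psi(X)Y-\psi(Y)X=\varphi([X,Y])$ rather than symmetry is accurate. But the proof stops exactly where it needs to start. The entire content of the theorem is the step you defer: showing that the coupling of skew-symmetry with the relations $\varphi([X,w])=[\psi(X),w]+[X,\varphi(w)]$ across the filtrands forces $\varphi=0$. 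You state this as ``I would then combine, level by level, \dots'' and ``I expect the cleanest organization to be an induction on the filtration degree (or \dots a cohomological vanishing argument),'' but no such induction is carried out, no inductive hypothesis is formulated, and no mechanism is exhibited by which ``the orthogonality of $\fg_0$ suppl[ies] the decisive sign.'' As written, this is a plan with an acknowledged hole, not a proof; it is not even clear that the proposed induction closes, since $\varphi|_{\fg_{-k}}$ takes values in $\fg_{-k+1}$, on which the extended inner product gives no a priori constraint analogous to skew-symmetry.

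For contrast, the paper proves the statement only in the case $\dim\fg_{-1}=2$ (the only case it needs), and does so by a completely concrete, metric-free-in-spirit argument: since $\fg_0\subseteq\mathfrak{so}(2)$ is at most one-dimensional and acts as a complex structure on $\fg_{-1}$, a nonzero $\vp\in\hat\fg_1$ can be normalized so that $\vp(e'_{-1})=e_0$ and $\vp(e''_{-1})=0$; the Leibniz rule then gives $\vp(e_{-2})=[e_0,e''_{-1}]=-e'_{-1}$, and setting $\tilde e_{-s}=[e''_{-1},\tilde e_{1-s}]$ one finds $\vp(\tilde e_{-s})=\tilde e_{1-s}$, so by induction every $\tilde e_{-s}$ is nonzero and $\fg$ would be infinite-dimensional --- a contradiction. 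This is precisely the kind of closed, checkable propagation argument your proposal gestures at but does not supply; note that it exploits the rank-2 hypothesis (one-dimensionality of $\fg_0$ and of $\ker\psi$) in an essential way, whereas your sketch aims at the general statement, where the paper itself defers to Morimoto's proof. To repair your write-up you would either need to actually execute the level-by-level induction (and justify why it terminates), or specialize to $\dim\fg_{-1}=2$ and produce an explicit contradiction as the paper does.
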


This theorem is due to Morimoto \cite{Mo}. His proof is based on
a result due to Yatsui. In the case of our interest we give a simpler argument.

\smallskip

 {\bf Proof in the case $\dim\fg_{-1}=2$.}
Clearly the only possibility for non-zero $\fg_0$ is $\R=\mathfrak{so}(2)=\langle e_0\rangle$
that acts on $\fg_{-1}=\langle e'_{-1},e''_{-1}\rangle$ as a complex structure:
$[e_0,e'_{-1}]=e''_{-1}$, $[e_0,e''_{-1}]=-e'_{-1}$.

For $0\neq\vp\in\hat\fg_1$ there is a basis of $\fg_{-1}$ such that
$\vp(e'_{-1})=e_0$, $\vp(e''_{-1})=0$. Then for $e_{-2}=[e'_{-1},e''_{-1}]$ we have $\vp(e_{-2})=-e'_{-1}$.
Let $\tilde e_{-2}=e_{-2}$ and define recursively $\tilde e_{-s}=[e''_{-1},\tilde e_{1-s}]$, $s>2$.
We have $\vp(\tilde e_{-s})=\tilde e_{1-s}$ so by induction $\tilde e_{-s}\neq0$ $\forall s>2$,
implying that $\dim\fg=\infty$. This is a contradiction. \qed


\end{document}